\documentclass[11pt, twoside]{article}
\usepackage{fancyhdr}
\usepackage{amsthm}
\newtheorem{theorem}{Theorem}[section]

\newtheorem{lemma}[theorem]{Lemma}
\newtheorem{corollary}[theorem]{Corollary}

\newtheorem{problem}[theorem]{Problem}

\theoremstyle{remark}

\usepackage{cite}
\usepackage[usenames]{color}
\usepackage{titletoc}
\usepackage{latexsym}
\usepackage{amsmath}
\usepackage{amssymb}
\usepackage{multicol}
\usepackage{graphics}
\usepackage{graphicx}
\usepackage{subfigure}
\usepackage{indentfirst}
\usepackage{epsfig}
\usepackage{mathrsfs}
\usepackage[top=2.54cm, bottom=2.54cm, left=2.80cm, right=2.80cm]{geometry}

\usepackage[T1]{fontenc}
\usepackage[utf8]{inputenc}
\usepackage{lmodern}
\usepackage{mathtools}
\usepackage{microtype}
\usepackage{enumitem}
\usepackage{needspace}
\usepackage[hidelinks]{hyperref}
\hypersetup{
  pdftitle={Spectral Turan bounds for bipartite expansions in linear hypergraphs},
  pdfsubject={Spectral Turan bounds for bipartite expansions in linear hypergraphs},
  pdfkeywords={linear hypergraph, bipartite expansion, spectral Turan problem, Berge hypergraph}
}
\allowdisplaybreaks[2]
\newcommand{\F}{\mathbb F}

\newcommand{\one}{\mathbf 1}
\newcommand{\Berge}{\mathcal B}
\newcommand{\exlin}{\operatorname{ex}^{\mathrm{lin}}}
\newcommand{\spexlin}{\operatorname{spex}^{\mathrm{lin}}}
\newcommand{\inj}{\operatorname{inj}}
\newcommand{\Nm}{\operatorname{Nm}}

\newcommand{\norm}[1]{\left\lVert#1\right\rVert}
\newcommand{\doi}[1]{\href{https://doi.org/#1}{\nolinkurl{#1}}}

\title{Spectral Tur\'an problems for complete bipartite expansions in linear hypergraphs}
\author{Lusheng Fang$^{a}$, \, Guorong Gao$^{b,c}$\thanks{This research is supported by the National Key R\&D Program of China (Grant No.~2023YFA1010202), the National Natural Science Foundation of China (Grant No.~12401448), and the Natural Science Foundation of Fujian Province (Grant No.~2024J08030). Email addresses: lusheng\_fang@126.com, grgao@fzu.edu.cn}\\
\small $^a$School of Mathematics and Statistics, Minnan Normal University,\\
\small Zhangzhou, Fujian, China\\
\small $^b$Center for Discrete Mathematics and Theoretical Computer Science,\\
\small Fuzhou University, Fuzhou, Fujian, China\\
\small $^c$School of Mathematics and Statistics, Fuzhou University,\\
\small Fuzhou, Fujian, China}
\date{}
\makeatletter
\renewcommand{\maketitle}{%
  \begin{center}
    {\LARGE\bfseries\@title\par}%
    \ifx\@author\@empty\else
      \vspace{1em}{\large\begin{tabular}[t]{c}\@author\end{tabular}\par}%
    \fi
  \end{center}
  \vspace{0.5em}
  \thispagestyle{plain}%
  \@thanks % Print the funding/e-mail footnote generated by \thanks.
}
\makeatother

\begin{document}
\maketitle
\begin{abstract}
An $r$-uniform hypergraph is linear if any two edges share at most one vertex.
For a family $\mathcal F$ of $r$-graphs, let $\exlin_r(n,\mathcal F)$ and
$\spexlin_r(n,\mathcal F)$ denote, respectively, the maximum number of
edges and the maximum spectral radius of an $n$-vertex $\mathcal F$-free
linear $r$-graph.
In this paper, we show that for $t\ge s\ge3$,
$$ \spexlin_r(n,K_{s,t}^{+})\le \frac{(t-s+1)^{1/s}}{r-1}n^{1-1/s}+o(n^{1-1/s}),$$
where $K_{s,t}^{+}$ is the $r$-graph obtained from the complete bipartite graph $K_{s,t}$ by enlarging each edge with $r-2$ new vertices.
This answers a problem of She, Fan and Kang in a stronger form. Moreover, our result implies that
$$ \operatorname{ex}^{\operatorname{lin}}_r(n,K_{s,t}^{+})
 \le \frac{(t-s+1)^{1/s}}{r(r-1)}n^{2-1/s}
       +o(n^{2-1/s}),$$
which improves the leading coefficient in the corresponding bound of Gao and Chang. We also give a construction which implies that $\spexlin_r(n,K_{s,t}^{+})=\Theta(n^{1-1/s})$ and $\operatorname{ex}^{\operatorname{lin}}_r(n,K_{s,t}^{+})=\Theta(n^{2-1/s})$ for $s\ge 4r-6$ and  $t>(s-1)!$.

% -------
% She, Fan and Kang posed the question of whether, for \(t\ge s\ge2\), 
%  $$\spexlin_r(n,\mathcal B_r(K_{s,t}))\le \frac{(t-s+1)^{1/s}}{r-1}n^{1-1/s}
%  +o\bigl(n^{1-1/s}\bigr).$$
% In this paper, we answer this  problem for $t\ge s\ge3$ by proving the stronger result that
% $$ \spexlin_r(n,K_{s,t}^{+})\le \frac{(t-s+1)^{1/s}}{r-1}n^{1-1/s}+o(n^{1-1/s}).$$
% Since $K_{s,t}^{+}\in\mathcal B_r(K_{s,t})$, our result, together with the previously known case $s=2$,  completely resolves the problem of She, Fan and Kang. Moreover, using the result of Cooper and Dutle that the spectral radius of an $r$-graph is at least its average degree, we obtain
% $$ \operatorname{ex}^{\operatorname{lin}}_r(n,K_{s,t}^{+})
%  \le \frac{(t-s+1)^{1/s}}{r(r-1)}n^{2-1/s}
%        +o(n^{2-1/s}),$$
% which improves the leading coefficient \((t-1)^{1/s}\) in the corresponding bound of Gao and Chang to \((t-s+1)^{1/s}\).
\end{abstract}
\noindent\textbf{2020 Mathematics Subject Classification:} 05C35, 05C50, 05C65.\\
\textbf{Keywords:} linear hypergraph; spectral Tur\'an problem; bipartite
expansion.

\section{Introduction and main results}\label{sec:intro}
An $r$-uniform hypergraph (or $r$-graph for short) $H = (V(H),E(H))$ consists of a vertex set $V(H)$ and an edge set $E(H)$, where $E(H)$ is a set of $r$-element subsets of $V(H)$. As usual, a graph is a $2$-graph. Throughout this paper, we assume that $r\geq 2$. 
The hypergraph $H$ is \emph{linear} if any two distinct edges intersect
in at most one vertex. 

For an $r$-graph $H$ and a family of $r$-graphs $\mathcal{F}$, we say $H$ is $\mathcal{F}$-free if $H$ does not contain any member of $\mathcal{F}$ as a subgraph.
For a family $\mathcal F$ of $r$-graphs, let $\exlin_r(n,\mathcal F)$ and
$\spexlin_r(n,\mathcal F)$ denote, respectively, the maximum number of
edges and the maximum spectral radius of an $n$-vertex $\mathcal F$-free
linear $r$-graph.
Here the spectral radius of an $r$-graph is that of its
adjacency tensor, as defined by Cooper and Dutle~\cite{CooperDutle}. For simple graphs, the subscript $r$ and superscript $\mathrm{lin}$ are omitted from the notation above.

The complete bipartite graph $K_{s,t}$ is one of the fundamental objects in extremal graph theory. The classical theorem of K\H{o}v\'ari--S\'os--Tur\'an states that, for $t\ge s\ge2$,
$$\text{ex}(n,K_{s,t})
 \le \frac{(t-1)^{1/s}}{2}n^{2-1/s}+\frac{s-1}{2}n.$$
Subsequently, F\"uredi~\cite{Furedi} improved this bound  by showing that 
$$\text{ex}(n,K_{s,t})
 \le \frac{(t-s+1)^{1/s}}{2}n^{2-1/s}+o(n^{2-1/s}).$$
Nikiforov~\cite{Nikiforov} later established the corresponding spectral
analogue:
$$\text{spex}(n,K_{s,t})\le (t-s+1)^{1/s}n^{1-1/s}
 +(s-1)n^{1-2/s}+s-2.$$

Given a graph $F$, a hypergraph $H$ is called a Berge-$F$ if there is a bijection $f : E(F) \rightarrow E(H)$ such that $e \subset f (e)$ for each $e \in E(F )$.
Let $\Berge_r(F)$ denote the family of all $r$-uniform
Berge-$F$ hypergraphs. For an integer $r\ge2$, the
\emph{$r$-expansion} of $F$, denoted by $F^{+}$, is the $r$-graph obtained
from $F$ by enlarging every edge of $F$ with $r-2$ new vertices disjoint
from $V(F)$, where distinct edges are enlarged by pairwise disjoint sets of
new vertices.
Clearly, $F^{+}\in\Berge_r(F)$.

Gao and Chang~\cite{GaoChang} proved the following extension of K\H{o}v\'ari--S\'os--Tur\'an's theorem:
 $$\exlin_r(n,K_{s,t}^{+})
 \le \frac{(t-1)^{1/s}}{r(r-1)}n^{2-1/s}+O\bigl(n^{2-2/s}\bigr),$$
where $t\ge s\ge2$.
The spectral analogue was considered by Fang, Gao, Chang and
Hou~\cite{FGCH}. For $t\ge s\ge3$, they showed that
 $$\spexlin_r(n,K_{s,t}^{+})\le\frac{(s^2r^2t-1)^{1/s}}{r-1}n^{1-1/s}+o\bigl(n^{1-1/s}\bigr).$$
In particular, for $t\ge s=2$, they obtained the sharper bound
\begin{equation}\label{SK2t}
\begin{aligned}
\spexlin_r(n,K_{2,t}^{+})\le \frac{\sqrt{t-1}}{r-1}\sqrt n+O(1).
\end{aligned}
\end{equation}
She, Fan and Kang~\cite{SFK} studied the corresponding Berge problem.
For $t\ge s\ge 3$, they proved that 
$$ \spexlin_r\Bigl(n,\{\Berge_r(K_{s,t}),\Berge_r(C_3)\}\Bigl)\le
 \frac{(t-s+1)^{1/s}}{r-1}n^{1-1/s}
 +\frac{s-1}{r-1}n^{1-2/s}+s-2.$$
They further asked whether the  $\Berge_r(C_3)$-free condition can be removed while preserving the same leading term.

\begin{problem}[\cite{SFK}]\label{prob:SFK}
For $t\ge s\ge 2$, does the following inequality hold?
 $$\spexlin_r\bigl(n,\Berge_r(K_{s,t})\bigr)\le \frac{(t-s+1)^{1/s}}{r-1}n^{1-1/s}+o\bigl(n^{1-1/s}\bigr).$$
\end{problem}

The problem has been resolved for $r=2$ by Nikiforov~\cite{Nikiforov}, and the case
$r\ge3$ and $s=2$ follows from~\eqref{SK2t}, since
$K_{2,t}^{+}\in\Berge_r(K_{2,t})$. Thus only the case
$r\ge3$ and $t\ge s\ge3$ remains. We prove the following stronger result, which resolves the remaining case
and may be viewed as a hypergraph analogue of the spectral result of Nikiforov~\cite{Nikiforov}.

\begin{theorem}\label{thm:upper}
Let $r\ge3$ and $t\ge s\ge3$ be integers. Then 
\begin{equation*}
 \spexlin_r\bigl(n,K_{s,t}^+\bigr)
 \le \frac{(t-s+1)^{1/s}}{r-1}n^{1-1/s}
       +O\bigl(n^{1-1/(s-1)}\bigr)
       +O\bigl(n^{1-2/s+2/s^2}\bigr).
\end{equation*}
\end{theorem}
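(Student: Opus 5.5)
We reduce to the shadow graph and adapt Nikiforov's walk-counting argument. Let $H$ be an $n$-vertex linear $K_{s,t}^+$-free $r$-graph with spectral radius $\rho$. Let $\mathbf x$ be its nonnegative Perron vector, normalized by $\sum_v x_v^r=1$. For a linear $r$-graph, the eigen-equation reads
\[
\rho x_v^{r-1}=\sum_{e\ni v}\prod_{u\in e\setminus\{v\}}x_u .
\]
Apply AM--GM to each product, $\prod_{u\in e\setminus v}x_u\le \frac1{r-1}\sum_{u\in e\setminus v}x_u^{r-1}$. Setting $y_v=x_v^{r-1}$, this gives
\[
(r-1)\rho\, y\le A(G)\,y
\]
entrywise, where $G=\partial H$ is the $2$-shadow. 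By linearity of $H$, $G$ is simple and each edge of $G$ lies in exactly one hyperedge. Hence $(r-1)\rho\le \lambda(G)$. It therefore suffices to show
\[
\lambda(G)\le (t-s+1)^{1/s}n^{1-1/s}+O(n^{1-1/(s-1)})+O(n^{1-2/s+2/s^2}).
\]
The difficulty is that $G$ need not be $K_{s,t}$-free. We only know that $G$ contains no $K_{s,t}$ whose $st$ edges come from distinct hyperedges meeting $V(K_{s,t})$ only in the two endpoints, with the enlarging sets pairwise disjoint.

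The key step is a structural cleaning lemma. We will show that every copy of $K_{s,t}$ in $G$ can be destroyed, or at least the count of "common $s$-neighbourhoods" can be controlled, up to a lower-order error. Fix an $s$-set $S$ of vertices and let $N(S)$ be its common neighbourhood in $G$. For $w\in N(S)$, call $w$ \emph{good} for $S$ if the $s$ hyperedges joining $w$ to $S$ are distinct and contain no other vertex of $S$. By linearity, every $w\in N(S)$ fails this only if two vertices of $S$ lie in a common hyperedge through $w$. Such a hyperedge is determined by the pair, so at most $\binom s2$ vertices $w$ are bad.

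Among the good $w$, build an auxiliary conflict structure. Two good $w,w'$ conflict if their hyperedges to $S$ share a non-$S$ vertex, or if $w'$ lies in a hyperedge from $w$ to $S$. By linearity, each good $w$ conflicts with at most $s(r-2)\cdot s$ others. A greedy selection then produces $t$ pairwise non-conflicting good vertices whenever $|N(S)|\ge C_{r,s}t$, which yields a $K_{s,t}^+$. So
\[
|N(S)|\le C:=C(r,s,t)\quad\text{for every $s$-set } S .
\]
This is weaker than Nikiforov's $t-1$. To recover the sharp coefficient $t-s+1$ we need a finer statement: the number of pairs $(S,w)$ with $w\in N(S)$ that are not good is small. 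Concretely, such pairs force a triangle of $G$ inside a hyperedge, so their number is $O(\text{(number of edges)}\cdot n^{s-2})$ in suitable weighted form.

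Then we run Nikiforov's argument. It counts $s$-walks of the form $\sum_{S}|N(S)|$ weighted by $\mathbf y$, and uses
\[
\lambda^s\lesssim \sum_{S}\Bigl(\sum_{w\in N(S)}y_w\Bigr)\cdots
\]
together with the Füredi-type double counting. That double counting is where the bound $|N(S)|\le t-1$ is improved to the effective $t-s+1$: it accounts for $s$-sets inside a common neighbourhood. We replace $t-1$ by a bound valid for good neighbours, and absorb the bad pairs into the error terms.

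The term $O(n^{1-1/(s-1)})$ comes from the contribution of $(s-1)$-sets. Their common neighbourhoods are bounded using $K_{s-1,C}$-type counting, giving degree moments of order $n^{1-1/(s-1)}$. The term $O(n^{1-2/s+2/s^2})$ comes from the bad, triangle-generated pairs, estimated via the edge bound $e(G)=O(n^{2-1/s})$ of Gao--Chang.

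The main obstacle is precisely this sharp-coefficient step. The crude greedy bound gives only a constant $C\gg t$. To get $t-s+1$, we must show that Nikiforov's inequality only uses the common-neighbourhood bound for "generic" $s$-sets, meaning those not containing two vertices of one hyperedge. For generic $S$, a vertex $w\in N(S)$ is automatically good. The conflicts among the $w$'s also cause only $O(1)$ losses, which should be handled by passing to a subgraph of $G$ that deletes one edge from each triangle lying inside a hyperedge. This subgraph has spectral radius at least $\lambda(G)-O(1)$, and in it genuine $K_{s,t}$ copies nearly correspond to $K_{s,t}^+$. I would first attempt this by choosing, in each hyperedge, a random bipartition of its $r$ vertices and keeping only the crossing pairs. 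This loses a constant factor in the spectrum, so it is not acceptable as is. The deterministic fix is instead to work directly with the Perron-weighted walk counts and bound the non-generic contribution separately.
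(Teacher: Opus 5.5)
Your opening reduction is correct. AM--GM on the eigen-equation and linearity give $(r-1)\rho(H)\,y\le A(G)y$ entrywise with $G=\partial H$, hence $(r-1)\rho(H)\le\lambda(G)$. Combined with the graph inequality $\lambda^2\le P\lambda+Q'$, this recovers Lemma~\ref{lem:quadratic}. So what remains is exactly the paper's Lemma~\ref{prop:neighbor}, a bound $\sum_{v\in N_H(u)}d(v)\le\bigl((t-s+1)^{1/s}n^{1-1/s}+o(n^{1-1/s})\bigr)d(u)+Q$, and that is the whole substance of the theorem. Your proposal does not supply it.

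The inequality ``$\lambda^s\lesssim\sum_S(\cdots)\cdots$'' is never made precise, and it is not where $t-s+1$ comes from. In the paper the coefficient comes from restricting to a neighbourhood $s-2$ times: once in Lemma~\ref{prop:neighbor}, then $s-3$ times in the induction of Lemma~\ref{lem:recursive-bipartite}. Each restriction lowers $t$ by one, and a K\H{o}v\'ari--S\'os--Tur\'an/H\"older count at the base (Lemma~\ref{lem:basic}) gives $(t'-1)^{1/s}$. Each restriction works only because the hyperedge structure is kept. One retains just the edges meeting $N_H(u)$ in exactly one vertex, giving an hm-bipartite hypergraph whose head part is partitioned into the blocks $e\setminus\{u\}$ ($e\ni u$). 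One forbids only \emph{transversal} $K_{s,t-1}^+$, so that any copy extends to $K_{s,t}^+$ by adding $h_{uv_1},\dots,h_{uv_s}$. The discarded edges cost $O(d(u)^{2-1/(s-1)})$ by Lemma~\ref{lem:loss}, and this is the true source of the $O(n^{1-1/(s-1)})$ term. In the shadow graph this information is lost. A $K_{s,t-1}$ between $N_G(u)$ and the rest of $G$ need not extend at all: its $s$-part may lie in one hyperedge through $u$, or its edges may come from hyperedges meeting $N_H(u)$ twice or overlapping one another.

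The step you yourself flag as the main obstacle is also left unresolved. Your claim that conflicts among vertices of $N(S)$ ``cause only $O(1)$ losses'' is precisely what would destroy the constant, since an additive $O(1)$ in the common-neighbourhood bound turns $t-1$ into $t-1+O(1)$. What is needed is that $s$-sets admitting any conflict contribute only lower-order terms. The paper calls $S$ bad if some $x,y\in N_1(S)$ have $h_{ax}=h_{ay}$, or some $z\in h_{ax}\cap h_{a'y}$ with $z\notin\{x,y\}$. It bounds the bad pairs by $|F|=O(\sum_z d(z)^2)=O(m^2n^{1-2/s}+n)$ and deduces $|Y_2|=O(|F|^{1/s}m^{s-1})$ from H\"older and $\sum c_{xy}^s=O(m^s)$. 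Together with $|Y_1|=O(m^{s-1})$ for non-transversal sets, this costs only $O(m^{2/s}n^{1-2/s^2})+O(n)$.

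Your proposed fix does not work. Deleting an edge from every triangle inside a hyperedge makes each $K_r$ triangle-free, which removes a constant fraction of the shadow edges. The spectral radius then genuinely drops by a constant factor, not by $O(1)$. For example, take a $d$-regular linear $3$-graph and write each surviving path as $b\,a\,c$. Using $2x_ax_b\le x_a^2/\sqrt2+\sqrt2\,x_b^2$ gives $\lambda(G')\le\sqrt2\,d=\lambda(G)/\sqrt2$. This is the same defect you noticed for random bipartitions. Moreover, conflicts of the second kind involve two different hyperedges meeting outside $S\cup T$; they are not triangles inside a hyperedge, so this deletion would not remove them anyway.

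A minor point: at most $\binom s2(r-2)$ vertices $w$, not $\binom s2$, lie in a hyperedge spanned by two vertices of $S$.
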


Since every $\Berge_r(K_{s,t})$-free $r$-graph is
$K_{s,t}^{+}$-free, Theorem~\ref{thm:upper}, together with the known
cases $r=2$ and $s=2$, completely resolves Problem~\ref{prob:SFK}.

By the inequality $\rho(H)\ge re(H)/n$ of Cooper
and Dutle~\cite{CooperDutle},
Theorem~\ref{thm:upper} yields the following
corollary, which improves the result of Gao and Chang~\cite{GaoChang} and may be viewed as a hypergraph analogue of the result of F\"uredi~\cite{Furedi}.

\begin{corollary}\label{cor:edges}
Let $r\ge3$ and $t\ge s\ge3$ be integers. Then
\begin{equation*}
 \exlin_r(n,K_{s,t}^{+})
 \le \frac{(t-s+1)^{1/s}}{r(r-1)}n^{2-1/s}
       +O\bigl(n^{2-1/(s-1)}\bigr)
       +O\bigl(n^{2-2/s+2/s^2}\bigr).
\end{equation*}
\end{corollary}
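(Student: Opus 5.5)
The plan is to deduce Corollary~\ref{cor:edges} directly from Theorem~\ref{thm:upper}, using the fact that the spectral radius of the adjacency tensor dominates the average degree. Fix $r\ge3$ and $t\ge s\ge3$, and let $H$ be an $n$-vertex $K_{s,t}^{+}$-free linear $r$-graph with $e(H)=\exlin_r(n,K_{s,t}^{+})$ edges; such an $H$ exists because the maximum is taken over a finite nonempty family (the empty $r$-graph qualifies).

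The first step is to lower-bound $\rho(H)$. I would invoke the inequality of Cooper and Dutle~\cite{CooperDutle}, $\rho(H)\ge r\,e(H)/n$. This comes from evaluating the defining multilinear form of the adjacency tensor at the uniform unit vector $x=n^{-1/r}\one$ in the $\ell^r$-norm. Each edge contributes $r!\cdot\frac{1}{(r-1)!}\cdot n^{-1}=r/n$, so the form equals $r\,e(H)/n$, and $\rho(H)$ is the maximum of this form over the nonnegative unit $\ell^r$-sphere.

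The second step is the upper bound. Since $H$ is linear and $K_{s,t}^{+}$-free, Theorem~\ref{thm:upper} gives
\begin{equation*}
\frac{r\,e(H)}{n}\le \rho(H)\le \spexlin_r(n,K_{s,t}^{+})\le \frac{(t-s+1)^{1/s}}{r-1}n^{1-1/s}+O\bigl(n^{1-1/(s-1)}\bigr)+O\bigl(n^{1-2/s+2/s^2}\bigr).
\end{equation*}
Multiplying through by $n/r$, where $r$ is a fixed constant absorbed into the $O$-terms, yields
\begin{equation*}
e(H)\le \frac{(t-s+1)^{1/s}}{r(r-1)}n^{2-1/s}+O\bigl(n^{2-1/(s-1)}\bigr)+O\bigl(n^{2-2/s+2/s^2}\bigr),
\end{equation*}
which is exactly the claimed bound.

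There is no real obstacle here, since all the difficulty lies in Theorem~\ref{thm:upper}. The only point needing care is the normalization of the adjacency tensor: with the Cooper--Dutle convention, entries are $1/(r-1)!$ on edges, and this is what makes the uniform vector give precisely $r\,e(H)/n$ rather than a different constant multiple.
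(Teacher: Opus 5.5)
Your proposal is correct and follows essentially the same route as the paper, which also derives Corollary~\ref{cor:edges} by combining Theorem~\ref{thm:upper} with the Cooper--Dutle inequality $\rho(H)\ge r\,e(H)/n$ and multiplying through by $n/r$. Your check that the uniform vector $n^{-1/r}\one$ gives exactly $r\,e(H)/n$ under the $1/(r-1)!$ normalization is accurate.
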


Furthermore, for $r\ge3$, $s\ge4r-6$ and $t>(s-1)!$, we construct $\Berge_r(K_{s,t})$-free linear $r$-graphs with $\Omega(n^{2-1/s})$ edges, extending the lower-bound result of Gao and Chang~\cite{GaoChang}.

\begin{theorem}\label{thm:general-uniformity}
Let $r\ge3$, $s\ge4r-6$ and
$t>(s-1)!$ be integers. Then, for sufficiently large $n$, there is a
$\Berge_r(K_{s,t})$-free linear $r$-graph $H$ on $n$ vertices satisfying
\begin{equation*}
 e(H)\ge\left(\frac{1}{r(r-1)}-o(1)\right)n^{2-1/s}.
 \label{eq:lower-edge}
\end{equation*}
\end{theorem}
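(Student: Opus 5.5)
We build $H$ from a dense bipartite $K_{s,t}$-free graph with strong algebraic structure, namely the norm graph (projective norm graph) $G$. Each edge of $G$ is then replaced by an $r$-edge in a linear way, and the edge count is tuned to $n^{2-1/s}/(r(r-1))$.

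Concretely, fix a prime power $q$ and consider the projective norm graph on $\F_{q^{s-1}}\times\F_q^{*}$. There $(A,a)\sim(B,b)$ iff $\Nm(A+B)=ab$, where $\Nm$ is the norm from $\F_{q^{s-1}}$ to $\F_q$. This graph has $N\approx q^{s}$ vertices, is $K_{s,(s-1)!+1}$-free (Alon--Rónyai--Szabó), and has degree about $q^{s-1}\approx N^{1-1/s}$. Since $t>(s-1)!$ it is $K_{s,t}$-free.

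To obtain an $r$-graph, I would not expand edges with new vertices, since that wastes vertices and loses the constant. Instead I would group vertices so that each hyperedge is an $r$-set that is a clique, or at least a structured configuration, in an auxiliary algebraic graph. A natural route is to find inside the norm graph a family of edge-disjoint $K_r$'s covering almost all edges. Then $H$ consists of these $K_r$'s as hyperedges, and:
\begin{itemize}
\item edge-disjointness of the cliques gives linearity;
\item the edge count is $\approx e(G)/\binom r2 \approx \frac{N^{2-1/s}/2}{r(r-1)/2}=\frac{N^{2-1/s}}{r(r-1)}$, matching the target.
\end{itemize}

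The second step is to control Berge copies. A Berge-$K_{s,t}$ in $H$ selects, for each pair $uv$ of $K_{s,t}$, a distinct hyperedge containing $u,v$. Each hyperedge is a clique of $G$, so $u,v$ are adjacent in $G$, and the Berge-$K_{s,t}$ projects to a genuine $K_{s,t}$ in $G$, which does not exist. This argument needs no distinctness beyond the vertices themselves, so Berge-freeness is inherited automatically. The real content is therefore the existence of a near-perfect $K_r$-decomposition of a large subgraph of the norm graph.

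The third step, which I expect to be the main obstacle, is producing this $K_r$-packing. The plan is to use a random greedy or nibble argument (Frankl--Rödl / Pippenger--Spencer) on the auxiliary $\binom r2$-uniform hypergraph whose vertices are edges of $G$ and whose hyperedges are the $K_r$'s of $G$. This requires the norm graph to be pseudorandom enough that every edge lies in roughly the same number $D$ of copies of $K_r$, with codegrees $o(D)$.

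Pseudorandomness should come from the algebraic structure. Common neighbourhoods of $k$ vertices are solution sets of systems of $k$ norm equations in $s$ variables over $\F_q$, and Lang--Weil type estimates give about $q^{s-k}$ solutions for $k\le s$, uniformly up to lower-order error. For an edge $uv$, the number of $K_r$'s containing it counts $(r-2)$-tuples in iterated common neighbourhoods. This needs the intersection of up to $r-1+ (r-2)$ neighbourhoods to stay in the "generic" regime, which is presumably where a hypothesis like $s\ge 4r-6$ enters: it ensures every relevant system is underdetermined, so the counts concentrate.

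Once degrees are $D(1+o(1))$ and codegrees are $o(D)$, Pippenger--Spencer yields a packing covering all but $o(e(G))$ edges. Finally, interpolating $q$ between prime powers, using the density of primes, and padding with isolated vertices gives every large $n$ with $e(H)\ge(\frac1{r(r-1)}-o(1))n^{2-1/s}$.
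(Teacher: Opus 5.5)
Your architecture is the paper's: the Alon--R\'onyai--Szab\'o norm graph $G_q$, the auxiliary $\binom r2$-graph $\mathcal H$ on $E(G_q)$ whose hyperedges are the copies of $K_r$, and a Pippenger--Spencer matching giving an edge-disjoint $K_r$-packing of size $(1-o(1))e(G_q)/\binom r2$. Linearity from edge-disjointness, Berge-$K_{s,t}$-freeness by projecting to $G_q$, and padding with isolated vertices are also as in the paper. The gap is the step you flag as the main obstacle, which is the only nontrivial one. You never establish the Pippenger--Spencer hypotheses, and the route you sketch does not work as stated.

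Pointwise regularity is false. The vertices $(A,a)$ and $(A,b)$ with $a\neq b$ have no common neighbour, since a common neighbour $(X,x)$ would force $ax=\Nm(A+X)=bx$. Yet they are adjacent whenever $ab=\Nm(2A)$. So $G_q$ has $\Theta(q^s)$ edges lying in no triangle, hence in no $K_r$. The same example refutes your claim that $k$ vertices have about $q^{s-k}$ common neighbours ``uniformly''. Lang--Weil gives $q^{\dim V}(1+O(q^{-1/2}))$ points only for absolutely irreducible $V$, and you give no way to show that all but $o(e(G_q))$ edges avoid the degenerate configurations. Your explanation of $s\ge 4r-6$ also does not hold up: requiring $2r-3$ norm equations in $s$ unknowns to be underdetermined would only suggest $s>2r-3$.

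The missing idea is that pointwise control is unnecessary: second moments suffice, and these are global subgraph counts. Write $n'=e(G_q)$ and $r'=\binom r2$, and suppose
\begin{itemize}
\item[(a)] $e(\mathcal H)=(1+o(1))n'D/r'$,
\item[(b)] $\sum_x(d_{\mathcal H}(x)-D)^2=o(n'D^2)$,
\item[(c)] $\sum_{\{x,y\}}d_{\mathcal H}(x,y)^2=o(n'D^2)$.
\end{itemize}
Choose a suitable $\varepsilon=o(1)$. Deleting the $o(n')$ vertices of degree above $(1+\varepsilon)D$, and all hyperedges through pairs of codegree above $\varepsilon D$, costs only $o(n'D)$ hyperedges. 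Pippenger--Spencer then applies to what remains. Only upper bounds on degrees are needed, so the triangle-free edges above are harmless.

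To verify (b) and (c), let $Q_\ell$ count ordered pairs of $K_r$'s sharing $\ell$ vertices, i.e.\ copies of the graph $F_\ell$ formed by two $K_r$'s glued along $\ell$ vertices. Then $\sum_x d_{\mathcal H}(x)^2=\sum_\ell\binom{\ell}{2}Q_\ell$ and $\sum_{\{x,y\}}d_{\mathcal H}(x,y)^2=\sum_\ell\binom{\binom{\ell}{2}}{2}Q_\ell$.

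The paper counts these with the spectral counting lemma of Alon--Pudl\'ak and Bayer--M\'esz\'aros--R\'onyai--Szab\'o, fed by Szab\'o's bound $\norm{M|_{\one^\perp}}\le q^{(s-1)/2}$. With $p=(q^{s-1}-1)/n_q\approx q^{-1}$, the lemma gives $\inj(F,G_q)=(1+o(1))n_q^{v(F)}p^{e(F)}$ for all $F$ with $\Delta(F)\le\Delta$, provided $q^{(s-1)/2}=o(n_qp^{\Delta})=o(q^{s-\Delta})$. Since $\Delta(F_2)=2r-3$, this condition is exactly $s\ge4r-6$; the factor $2$ comes from the square-root size of the eigenvalue error. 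It yields $Q_2=(1+o(1))n'D^2$ and $Q_\ell=o(n'D^2)$ for $3\le\ell\le r$, which gives both (b) and (c).
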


Combining the construction in
Theorem~\ref{thm:general-uniformity}
with the upper bounds above, we obtain
the following corollary.

\begin{corollary}\label{cor:order}
Let $r\ge3$, $s\ge4r-6$ and $t>(s-1)!$
be integers. Then
\[
\spexlin_r(n,K_{s,t}^{+})
=\Theta\bigl(n^{1-1/s}\bigr)
\]
and
\[
\exlin_r(n,K_{s,t}^{+})
=\Theta\bigl(n^{2-1/s}\bigr).
\]
\end{corollary}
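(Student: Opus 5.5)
The statement to prove is Corollary~\ref{cor:order}. Both parts follow by pairing the upper bounds already stated with the construction of Theorem~\ref{thm:general-uniformity}. I will do the edge count first and then the spectral radius.

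\textbf{Edge count.} For the upper bound on $\exlin_r(n,K_{s,t}^{+})$ I use Corollary~\ref{cor:edges}. The hypotheses $s\ge 4r-6\ge 6$ and $t>(s-1)!\ge s$ give $t\ge s\ge 3$, so it applies. The error terms are lower order: $2-1/(s-1)<2-1/s$, and $2-2/s+2/s^2<2-1/s$ because $1/s<1$. Hence $\exlin_r(n,K_{s,t}^{+})=O(n^{2-1/s})$.

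For the lower bound I take the hypergraph $H$ from Theorem~\ref{thm:general-uniformity}. It is a $\Berge_r(K_{s,t})$-free linear $r$-graph with $e(H)\ge (\frac{1}{r(r-1)}-o(1))n^{2-1/s}$. It is also $K_{s,t}^{+}$-free: a copy of $K_{s,t}^{+}$ inside $H$ would be a subgraph lying in $\Berge_r(K_{s,t})$, because $K_{s,t}^{+}\in\Berge_r(K_{s,t})$. So $H$ is admissible and $\exlin_r(n,K_{s,t}^{+})=\Omega(n^{2-1/s})$.

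\textbf{Spectral radius.} The upper bound $\spexlin_r(n,K_{s,t}^{+})=O(n^{1-1/s})$ is Theorem~\ref{thm:upper}, with the same check that the error exponents $1-1/(s-1)$ and $1-2/s+2/s^2$ are below $1-1/s$. For the lower bound I apply the Cooper--Dutle inequality $\rho(H)\ge re(H)/n$ to the same $H$:
\[
\rho(H)\ge \frac{r}{n}\Bigl(\frac{1}{r(r-1)}-o(1)\Bigr)n^{2-1/s}=\Bigl(\frac{1}{r-1}-o(1)\Bigr)n^{1-1/s}.
\]

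Theorem~\ref{thm:general-uniformity} only guarantees $H$ for sufficiently large $n$, but $\Theta$ is an asymptotic statement, so this is enough. There is no real obstacle here; the only points needing care are checking that the hypotheses of the earlier results are satisfied and that every error term is of lower order.
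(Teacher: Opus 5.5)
Your proposal is correct and follows the paper's argument: the upper bounds come from Theorem~\ref{thm:upper} and Corollary~\ref{cor:edges}, which apply since $s\ge 4r-6\ge 6$ and $t>(s-1)!\ge s$. The lower bounds come from the construction of Theorem~\ref{thm:general-uniformity}, together with $K_{s,t}^{+}\in\Berge_r(K_{s,t})$ and the Cooper--Dutle inequality $\rho(H)\ge re(H)/n$. One small fix: $2-2/s+2/s^2<2-1/s$ is equivalent to $2/s<1$, i.e.\ $s>2$, not to $1/s<1$ (the two exponents coincide at $s=2$), though this is harmless here because $s\ge 6$.
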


The rest of this paper is organized as follows.
Section~\ref{sec:prelim} contains notation and preliminary results.
We prove Theorems~\ref{thm:upper} and
\ref{thm:general-uniformity} in Sections~\ref{secproof}
and~\ref{sec:general-lower}, respectively.

\section{Preliminaries}\label{sec:prelim}
In this section, we introduce definitions and notation used throughout the paper
and state some preliminary results.

\subsection{Notation}
For an $r$-graph $H$ and a vertex $v\in V(H)$, let
\[
 N_H(v)=\{u\in V(H)\setminus\{v\}:\{u,v\}\subseteq e
 \text{ for some }e\in E(H)\}.
\]
For two vertices $u,v\in V(H)$, put
\[
 N_H(u,v)=\{x\in V(H)\setminus\{u,v\}:\{x,u,v\}\subseteq e
 \text{ for some }e\in E(H)\}.
\]
For $S=\{x_1,\ldots,x_s\}\in\binom{V(H)}s$, define
 $N_1(S)=\bigcap_{i=1}^s N_H(x_i)$ and
 $$N_2(S)=\bigcup_{1\le i<j\le s}N_H(x_i,x_j).$$
If $a,b$ are adjacent vertices of a linear hypergraph, they belong to a
unique edge; we denote this edge by $h_{ab}$.

The degree of a vertex $v$, denoted by $d_H(v)$, is the number of edges
containing $v$. We write $d(v)$ when the hypergraph is clear, and put
$E_v=\{e\in E(H):v\in e\}$. For $X\subseteq V(H)$ and $0\le i\le r$, let
 $E_i(X)=\{e\in E(H):|e\cap X|=i\}$ and $e_i(X)=|E_i(X)|.$
For $v\in X$, let
 $E_i^v(X)=\{e\in E_i(X):v\in e\}$ and $e_i^v(X)=|E_i^v(X)|.$
For an integer $i$ and a set $S$, let $\binom Si$ denote the family of all
$i$-subsets of $S$. 

\subsection{Spectral radius of hypergraphs}\label{spe}

A real tensor of order $r$ and dimension $n$ $\mathcal{T}=(\mathcal{T}_{i_1\cdots i_r})$ consists of $n^{r}$ real entries $\mathcal{T}_{i_1\cdots i_r}$  for all $i_1, i_2, \ldots, i_r\in  [n]$, where $[n]=\{1,2,\ldots,n\}$. Evidently, a vector of dimension $n$ is a tensor of order $1$ and a matrix is a tensor of order $2$. A tensor $\mathcal{T}$ is called symmetric if the value of $\mathcal{T}_{i_1\cdots i_r}$ is invariant under any permutation of the indices $i_1, i_2, \ldots, i_r$.
In 2005, Qi \cite{Qi05} and Lim \cite{Lim05} independently introduced the concepts of tensor eigenvalues and spectra.  
If there exist $\lambda \in \mathbb{C}$ and a nonzero vector $\mathbf{x} \in \mathbb{C}^n$ satisfying
\begin{equation*}
\begin{aligned}
\mathcal{T}\mathbf{x}^{r-1}=\lambda \mathbf{x}^{[r-1]},
\end{aligned}
\end{equation*}
then $\lambda$ is called an eigenvalue of $\mathcal{T}$ and $\mathbf{x}$ is a corresponding eigenvector, where $\mathbf{x}^{[r-1]}=(x_{1}^{r-1}, x_{2}^{r-1}, \ldots,  x_{n}^{r-1})^T\in \mathbb{C}^n\setminus\{0\}$ and $\mathcal{T}\mathbf{x}^{r-1} \in \mathbb{C}^n$ is defined by
\begin{equation*}
\begin{aligned}
(\mathcal{T}\mathbf{x}^{r-1})_i = \sum_{ i_2, \cdots, i_r \in [n]}\mathcal{T}_{ii_2\cdots i_r}x_{i_2}\cdots x_{i_r}.
\end{aligned}
\end{equation*}
The maximal absolute value of the eigenvalues of $\mathcal{T}$ is called the spectral radius of $\mathcal{T}$, denoted by $\rho(\mathcal{T})$.

In 2012, Cooper and Dutle \cite{CooperDutle} defined the adjacency tensor $\mathcal{A}(H)$ of an $r$-graph $H$. The adjacency tensor $\mathcal{A}(H)$ of $H$ is a symmetric tensor of order $r$ and dimension $n$ defined by
\begin{equation*}
\mathcal{A}_{i_1\cdots i_r}=\left\{
\begin{array}{ll}
\frac{1}{(r-1)!}&  \;\ if\ \{i_1,\ldots, i_r\} \in E,\\
0 &\;\ otherwise.\\
\end{array}\right.
\end{equation*}
For an $r$-graph $H$, the spectral radius of $H$ is defined as the spectral radius of the adjacency tensor $\mathcal{A}(H)$, denoted by $\rho(H)$.  In addition, Qi \cite{Qi05} showed that $\rho(H)$ is also given by the following variational formula:
\begin{equation*} 
\begin{aligned}
\rho(H)=\max_{\|x\|_r=1}\mathbf{x}^T\mathcal{A}(H)\mathbf{x}^{r-1}.
\end{aligned}
\end{equation*}

\subsection{Some useful results}

\begin{lemma}[\cite{GaoChang}]\label{lem:extension}
Let $H$ be a linear $r$-graph, where $r\ge3$, and let $t\ge s\ge2$ be
integers. If some $S\in\binom{V(H)}s$ satisfies
\[
 |N_1(S)\setminus N_2(S)|\ge s^2r^2t,
\]
then $H$ contains $K_{s,t}^{+}$.
\end{lemma}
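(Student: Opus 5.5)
The plan is a greedy selection of $t$ vertices from $T_0:=N_1(S)\setminus N_2(S)$. Each chosen $y$ will serve as a vertex of the $t$-side of $K_{s,t}$, with the edges $h_{x_1y},\dots,h_{x_sy}$ as the expanded edges. Write $S=\{x_1,\dots,x_s\}$.

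\textbf{Step 1: the edges attached to a single vertex.} Fix $y\in T_0$. Since $y\in N_1(S)$, each $x_i$ is adjacent to $y$, so the edge $h_{x_iy}$ is defined. Since $y\notin N_2(S)$, no edge contains $\{x_i,x_j,y\}$ with $i\ne j$. Hence $h_{x_iy}\cap S=\{x_i\}$ and the edges $h_{x_1y},\dots,h_{x_sy}$ are pairwise distinct. By linearity they pairwise meet only in $y$. Let
\[
U_y=\bigcup_{i=1}^s h_{x_iy},
\]
so that $|U_y|\le s(r-1)+1\le sr$.

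\textbf{Step 2: the compatibility condition.} Call $y'\in T_0\setminus\{y\}$ \emph{bad for $y$} if for some $j$ the edge $h_{x_jy'}$ contains a vertex of $U_y\setminus\{x_j\}$. If $y'$ is not bad for $y$, then for every $j$ we have $h_{x_jy'}\cap U_y\subseteq\{x_j\}$. It follows that $h_{x_jy'}\cap h_{x_iy}=\emptyset$ for $i\ne j$, and $h_{x_jy'}\cap h_{x_jy}=\{x_j\}$; in particular these two edges are distinct and $y'\notin U_y$. These are exactly the intersection patterns required in $K_{s,t}^+$. Moreover, this condition constrains all the edges of $y'$ against all the edges of $y$, so it is effectively symmetric for the purpose of building the configuration.

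\textbf{Step 3: counting bad vertices.} Suppose $y'$ is bad for $y$, witnessed by $j$ and $z\in U_y\setminus\{x_j\}$ with $z\in h_{x_jy'}$. Then $z$ and $x_j$ are adjacent, and linearity gives $h_{x_jy'}=h_{x_jz}$, so $y'\in h_{x_jz}$. For a fixed pair $(j,z)$ this leaves at most $r$ choices of $y'$. There are $s$ choices of $j$ and at most $sr$ choices of $z$. Hence fewer than $s^2r^2$ vertices of $T_0$ are bad for $y$.

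\textbf{Step 4: greedy selection.} Choose $y_1\in T_0$ and delete from $T_0$ the vertex $y_1$ together with every vertex bad for $y_1$; this removes at most $s^2r^2$ vertices. Then choose $y_2$ from the remainder, and continue in the same way. Since $|T_0|\ge s^2r^2t$, we can carry out $t$ steps. For every $k<l$, the vertex $y_l$ is not bad for $y_k$, so by Step 2 all the required intersection conditions hold between the edges of $y_k$ and those of $y_l$. Together with Step 1, the $st$ edges $h_{x_iy_k}$ are pairwise distinct. Two of them share exactly one vertex when they correspond to adjacent edges of $K_{s,t}$, namely the intended common vertex $x_i$ or $y_k$, and are disjoint otherwise. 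Since each edge meets $S\cup\{y_1,\dots,y_t\}$ only in its two intended vertices, the remaining $r-2$ vertices of the edges are pairwise disjoint new vertices. This yields a copy of $K_{s,t}^{+}$.

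\textbf{Main obstacle.} The delicate step is Step 2: the single local condition ``$h_{x_jy'}$ avoids $U_y\setminus\{x_j\}$'' must capture every way the configuration can fail. This includes coincidences such as $h_{x_iy}=h_{x_iy'}$ and $y'$ lying inside an edge of $y$. It must also be countable through linearity, by forcing $h_{x_jy'}=h_{x_jz}$, which is what gives the bound in Step 3.
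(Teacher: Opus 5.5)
The paper gives no proof of this lemma; it is quoted from Gao and Chang, so there is no in-paper argument to match against. Your greedy argument is correct and self-contained, and its count matches the constant $s^2r^2t$ exactly.

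The closest in-paper analogue is the good/bad $s$-set reasoning in Lemma~\ref{lem:basic}. There, the hm-bipartite structure forces $h_{ax}\cap S=\{a\}$; here you obtain this from $y\notin N_2(S)$. The paper bounds the number of $s$-sets containing a conflicting pair, whereas you fix $S$ and greedily pick a conflict-free set of $t$ common neighbours.

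Your symmetry remark in Step~2 is right. The statement ``$y'$ is not bad for $y$'' is equivalent to $h_{x_jy'}\cap h_{x_iy}=\emptyset$ for $i\ne j$ together with $h_{x_jy'}\cap h_{x_jy}=\{x_j\}$, and this is symmetric in $y,y'$. It also gives the fact you need at the end: for $l<k$, the vertex $y_l$ does not lie in any $h_{x_jy_k}$.

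There is one small arithmetic slip to fix. The numbers you give in Step~3 are $s$ choices of $j$, at most $sr$ of $z$, and at most $r$ of $y'$. These only show that \emph{at most} $s^2r^2$ vertices are bad for $y$, not fewer. Step~4 could then delete $s^2r^2+1$ vertices per round, and $|T_0|\ge s^2r^2t$ would no longer suffice once $t>s^2r^2$.

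The repair is immediate from sharper counts:
\begin{itemize}
\item $|U_y\setminus\{x_j\}|=s(r-1)$, since the $s$ edges through $y$ meet only in $y$.
\item $y'\in h_{x_jz}\setminus\{x_j\}$, because $T_0\cap S=\emptyset$.
\end{itemize}
Hence there are at most $s^2(r-1)^2$ bad vertices. Each round then removes at most $s^2(r-1)^2+1\le s^2r^2$ vertices, which is what your greedy selection needs.
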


Since $H$ is linear, $|N_2(S)|\le\binom s2(r-2)$. Hence
Lemma~\ref{lem:extension} immediately gives the following consequence.
\begin{corollary}\label{eq:common-bound}
Let $H$ be a $K_{s,t}^{+}$-free linear $r$-graph. Then, for every
$S\in\binom{V(H)}s$,
\[
 |N_1(S)|\le s^2r^2t-1+\binom s2(r-2).
\]
\end{corollary}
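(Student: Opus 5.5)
The plan is a direct counting argument: combine the contrapositive of Lemma~\ref{lem:extension} with a uniform bound on $|N_2(S)|$ coming from linearity. The only inclusion needed is
\[
 N_1(S)\subseteq \bigl(N_1(S)\setminus N_2(S)\bigr)\cup N_2(S),
\]
which gives $|N_1(S)|\le |N_1(S)\setminus N_2(S)|+|N_2(S)|$. It then suffices to bound the two terms separately.

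\textbf{First term.} Since $H$ is $K_{s,t}^{+}$-free, Lemma~\ref{lem:extension} (applicable because $r\ge3$ and $t\ge s\ge2$) cannot have its hypothesis satisfied by any $S\in\binom{V(H)}s$. Hence
\[
 |N_1(S)\setminus N_2(S)|\le s^2r^2t-1 \quad\text{for every such } S.
\]

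\textbf{Second term.} Fix a pair $x_i,x_j\in S$ with $i<j$. Because $H$ is linear, $x_i$ and $x_j$ lie in at most one common edge. If there is no common edge, then $N_H(x_i,x_j)=\emptyset$. Otherwise the common edge is $h_{x_ix_j}$, and every $x\in N_H(x_i,x_j)$ lies in $h_{x_ix_j}\setminus\{x_i,x_j\}$, so $|N_H(x_i,x_j)|\le r-2$. Taking the union over the $\binom s2$ pairs yields
\[
 |N_2(S)|\le\binom s2(r-2).
\]

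Adding the two bounds gives the stated inequality. There is no real obstacle here. The one point to check is that the linearity bound on $N_H(x_i,x_j)$ holds regardless of whether $x_i,x_j$ are adjacent, and the case split above handles this.
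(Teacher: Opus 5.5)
Your proposal is correct and follows the paper's argument: the paper also bounds $|N_2(S)|\le\binom s2(r-2)$ by linearity and applies the contrapositive of Lemma~\ref{lem:extension} to $N_1(S)\setminus N_2(S)$. Your case split on whether $x_i,x_j$ share an edge spells out the linearity step that the paper states in one line, and you rightly note that the standing hypotheses $r\ge3$, $t\ge s\ge2$ of Lemma~\ref{lem:extension} are implicitly assumed.
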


\begin{lemma}[\cite{GaoChang}]\label{lem:loss}
 Let $r\ge3$ and $t\ge s\ge2$ be fixed integers. If $H$ is a $K^+_{s,t}$-free linear $r$-graph, then for any $v \in V(H)$, 
 \begin{equation*}
 \bigl|\{e\in E(H):v\notin e,\ |e\cap N_H(v)|\ge2\}\bigr|
 = O(d(v)^{2-\frac{1}{s-1}}).
 \end{equation*}
\end{lemma}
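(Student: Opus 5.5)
The idea is to encode the edges being counted as edges of an auxiliary graph on $N_H(v)$, show that this graph contains no large complete bipartite subgraph by using Corollary~\ref{eq:common-bound}, and then apply the K\H{o}v\'ari--S\'os--Tur\'an theorem.

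\textbf{Setup.} Fix $v$ and write $d=d(v)$. By linearity, the edges of $E_v$ pairwise meet only in $v$. So $N_H(v)$ is the disjoint union of the $d$ blocks $e\setminus\{v\}$, $e\in E_v$, and $m:=|N_H(v)|=(r-1)d$.

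Let $\mathcal E=\{e\in E(H):v\notin e,\ |e\cap N_H(v)|\ge2\}$. Take $e\in\mathcal E$.
\begin{itemize}
\item Any two vertices of $e\cap N_H(v)$ lie in different blocks. Otherwise two vertices $x,y$ from the same block $f\setminus\{v\}$ would lie in both $e$ and $f$. Since $e\neq f$ (because $v\in f$ and $v\notin e$), this gives $|e\cap f|\ge 2$, contradicting linearity.
\item For each $e\in\mathcal E$, choose one pair $\{x_e,y_e\}\subseteq e\cap N_H(v)$. Distinct edges of $\mathcal E$ receive distinct pairs, since a pair lies in at most one edge.
\end{itemize}
Let $G$ be the simple graph on vertex set $N_H(v)$ with edge set $\{\{x_e,y_e\}:e\in\mathcal E\}$. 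Then $|\mathcal E|=e(G)$, and every edge of $G$ is a pair of vertices adjacent in $H$.

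\textbf{Key step: $G$ has no large $K_{s-1,T}$.} Let $C=s^2r^2t-1+\binom s2(r-2)$. Suppose $G$ contains a copy of $K_{s-1,T}$ with parts $A$ and $B$, where $|A|=s-1$ and $|B|=T$. Set $S=A\cup\{v\}$; this is an $s$-set because $v\notin N_H(v)$. Every $b\in B$ satisfies the following.
\begin{itemize}
\item $b$ is adjacent in $H$ to each vertex of $A$, because $G$-edges are $H$-adjacent pairs.
\item $b$ is adjacent to $v$, because $b\in N_H(v)$.
\end{itemize}
Also $b\notin S$. Hence $B\subseteq N_1(S)$. Corollary~\ref{eq:common-bound} gives $|N_1(S)|\le C$, so $T\le C$. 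Therefore $G$ is $K_{s-1,C+1}$-free.

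\textbf{Conclusion.} The constant $C+1$ depends only on $r,s,t$.
\begin{itemize}
\item If $s\ge3$, the K\H{o}v\'ari--S\'os--Tur\'an theorem for $K_{s-1,C+1}$ gives $e(G)=O\bigl(m^{2-1/(s-1)}\bigr)$.
\item If $s=2$, the graph $G$ is $K_{1,C+1}$-free, so it has bounded maximum degree. Then $e(G)=O(m)=O\bigl(m^{2-1/(s-1)}\bigr)$.
\end{itemize}
Since $m=(r-1)d$, in both cases $|\mathcal E|=O\bigl(d^{2-1/(s-1)}\bigr)$.

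The only delicate point is the key step: one must check that the common neighbours in $G$ of an $(s-1)$-set really are common $H$-neighbours of the $s$-set $A\cup\{v\}$. This holds because $G$-edges come from $H$-edges and $V(G)=N_H(v)$. Everything else is routine bookkeeping using linearity.
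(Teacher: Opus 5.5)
Your proof is correct. The paper does not prove this lemma itself; it is quoted from \cite{GaoChang}, so there is no in-paper argument to compare against.

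Your derivation is complete and yields constants depending only on $r,s,t$. It has three steps:
\begin{enumerate}
\item Encode each counted edge injectively, via linearity, by one pair inside $e\cap N_H(v)$. This gives a graph $G$ on the $(r-1)d(v)$ vertices of $N_H(v)$.
\item Observe that a $K_{s-1,T}$ in $G$ makes $A\cup\{v\}$ an $s$-set whose common $H$-neighbourhood contains the $T$-side. Then bound $T$ by Corollary~\ref{eq:common-bound}.
\item Finish with K\H{o}v\'ari--S\'os--Tur\'an for $s\ge3$, and with the bounded-degree argument for $s=2$.
\end{enumerate}

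Two minor remarks. First, your observation that the vertices of $e\cap N_H(v)$ lie in distinct blocks is never used afterwards and can be dropped. Second, the only external input is Lemma~\ref{lem:extension}, entering through Corollary~\ref{eq:common-bound}. In the paper's logical order that corollary comes before, and does not use, the present lemma, so invoking it is legitimate.
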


\begin{corollary}\label{CorGao}
 Let $r\ge3$ and $t\ge s\ge2$ be fixed integers. If $H$ is a $K^+_{s,t}$-free linear $r$-graph, then for any $v \in V(H)$, 
 \begin{equation*}
 \bigl|\{e\in E(H):\ |e\cap N_H(v)|\ge2\}\bigr|
 = O(d(v)^{2-\frac{1}{s-1}}).
 \end{equation*}
\end{corollary}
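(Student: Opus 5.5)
The plan is to deduce the statement from Lemma~\ref{lem:loss} by splitting the edges $e$ with $|e\cap N_H(v)|\ge2$ according to whether they contain $v$. Those not containing $v$ are exactly the edges counted in Lemma~\ref{lem:loss}. There are $O(d(v)^{2-\frac{1}{s-1}})$ of them.

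Next I count the edges that contain $v$. There are exactly $d(v)$ of them. Each such edge $e$ satisfies $e\setminus\{v\}\subseteq N_H(v)$, so $|e\cap N_H(v)|=r-1\ge2$ because $r\ge3$. Hence these edges all belong to the set in question, and they contribute exactly $d(v)$.

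It remains to absorb $d(v)$ into the error term. If $d(v)=0$, then $N_H(v)=\emptyset$, so no edge meets $N_H(v)$ in two or more vertices and the set is empty. If $d(v)\ge1$, then $s\ge2$ gives $2-\frac{1}{s-1}\ge1$, so $d(v)\le d(v)^{2-\frac{1}{s-1}}$. Adding the two contributions gives
\[
\bigl|\{e\in E(H): |e\cap N_H(v)|\ge2\}\bigr| \le O\bigl(d(v)^{2-\frac{1}{s-1}}\bigr)+d(v)=O\bigl(d(v)^{2-\frac{1}{s-1}}\bigr),
\]
where the implied constant depends only on $r,s,t$, as in Lemma~\ref{lem:loss}.

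There is no real obstacle here. The only points needing care are the use of $r\ge3$, which ensures edges through $v$ meet $N_H(v)$ in at least two vertices, and the exponent comparison, which needs $d(v)\ge1$.
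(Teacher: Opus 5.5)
Your proposal is correct and is exactly the deduction the paper intends; the corollary is stated without proof right after Lemma~\ref{lem:loss}. The $d(v)$ edges through $v$ each meet $N_H(v)$ in $r-1\ge2$ vertices, and they are absorbed using $d(v)\le d(v)^{2-\frac{1}{s-1}}$ for $d(v)\ge1$, which is what you do.
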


\begin{lemma}[{\cite{FGCH,SFK}}]\label{lem:quadratic}
Let $H$ be a linear $r$-graph with spectral radius $\rho$. If, for every vertex $u \in V(H)$,
 $$\sum_{v \in N_H(u)}d(v)\leq Pd(u)+Q,$$
then
\begin{equation*}
\begin{aligned}
\rho^2-\frac{P}{r-1}\rho-\frac{Q}{r-1}\leq 0,
\end{aligned}
\end{equation*}
where $P$ and $Q$ are parameters independent of the choice of $u$.
\end{lemma}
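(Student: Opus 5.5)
The statement immediately above is Lemma~\ref{lem:quadratic}, and I would prove it by passing to a positive Perron eigenvector and summing the eigen-equation twice. We may assume $H$ has no isolated vertices, or simply discard them: this changes neither $\rho$ nor the hypothesis. If $H$ is connected, then $\mathcal A(H)$ is weakly irreducible, and by the Perron--Frobenius theorem for nonnegative tensors (Cooper--Dutle) $\rho$ has a positive eigenvector $\mathbf x$. For a disconnected $H$, the spectral radius is the maximum over its components, and the hypothesis is inherited by each component, so it suffices to treat the connected case. For each $u$ the eigen-equation reads
\[
\rho x_u^{r-1}=\sum_{e\in E_u}\prod_{w\in e\setminus\{u\}}x_w .
\]

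The key step is to linearize. Put $y_u=x_u^{r-1}$. By AM--GM, $\prod_{w\in e\setminus\{u\}}x_w\le \frac1{r-1}\sum_{w\in e\setminus\{u\}}x_w^{r-1}$. Since $H$ is linear, every $w\in N_H(u)$ lies in exactly one edge with $u$, so summing over $E_u$ gives
\[
\rho y_u\le \frac1{r-1}\sum_{w\in N_H(u)}y_w .
\]
I then apply this inequality twice. Multiplying by $\rho$ and applying it again to each $w$ gives
\[
\rho^2 y_u\le \frac{1}{(r-1)^2}\sum_{w\in N(u)}\sum_{z\in N(w)}y_z .
\]
It is more convenient to sum over all $u$ instead. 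This yields
\[
\rho^2\sum_u y_u\le \frac1{(r-1)^2}\sum_z y_z\,\#\{(u,w):w\in N(u),\ z\in N(w)\}=\frac1{(r-1)^2}\sum_z y_z\sum_{w\in N(z)}|N(w)|.
\]
By linearity $|N(w)|=(r-1)d(w)$, so the right-hand side equals $\frac1{r-1}\sum_z y_z\sum_{w\in N(z)}d(w)$. The hypothesis bounds this by $\frac1{r-1}\sum_z y_z\bigl(Pd(z)+Q\bigr)$.

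It remains to handle the term $\sum_z y_z d(z)$. Summing the first linearized inequality over $u$ gives
\[
\rho\sum_u y_u\le\frac1{r-1}\sum_w y_w|N(w)|=\sum_w y_w d(w).
\]
This inequality points the wrong way: it bounds $\rho$ above by a weighted average degree, whereas I need $\sum_z y_z d(z)$ bounded above in terms of $\rho$. This is where I expect the main obstacle.

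To get around it, I would work directly with the eigen-equation combined with a degree weighting rather than the symmetric summation. For a fixed vertex $u$, sum $\rho y_w$ over $w\in N(u)$, using the \emph{exact} identity $\rho\sum_{w\in N(u)}y_w$ together with AM--GM for the inner terms and linearity, so that the vertices at distance two are counted with multiplicity through the edges. The goal is an inequality of the form $(r-1)\rho^2 y_u\le \sum_{w\in N(u)}y_w d(w)$ plus lower-order terms. Then I would take $u$ to maximize $y_u$ and bound each $y_w$ by $y_u$, except for the contribution of $u$ itself: those $w\in N(u)$ whose edge back to $u$ is counted contribute exactly $(r-1)d(u)\,y_u$, and this is what pairs with $\rho$ to give the $P\rho$ term.

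Concretely, I would aim for
\[
(r-1)\rho^2y_u\le \rho\,\cdot\!\!\sum_{w\in N(u)}y_w\;+\;\Bigl(\sum_{w\in N(u)}d(w)-d(u)\Bigr)y_u .
\]
Then I would use $\sum_{w\in N(u)}y_w\le$ (via $y_w\le y_u$ and the hypothesis), arranged so that the $d(u)$ terms combine into $P\rho\, y_u+Qy_u$. Making the constants align exactly so as to obtain $\rho^2-\frac{P}{r-1}\rho-\frac{Q}{r-1}\le0$ is the delicate step, and it mirrors Nikiforov's graph argument with $A^2$.
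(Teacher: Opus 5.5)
Your linearization $\rho y_u\le\frac1{r-1}\sum_{w\in N_H(u)}y_w$, its second application, and the double count leading to $\rho^2\sum_u y_u\le\frac1{r-1}\sum_z y_z\sum_{w\in N_H(z)}d(w)$ are all correct. You also located the real obstacle exactly. For $y=\mathbf x^{[r-1]}$ you only have $\rho\sum_w y_w\le\sum_w d(w)y_w$, whereas the argument needs the reverse inequality.

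The workaround in your last two paragraphs does not close this gap.
\begin{itemize}
\item Your target inequality is true, but only trivially: it follows from multiplying $(r-1)\rho\, y_u\le\sum_{w\in N_H(u)}y_w$ by $\rho$. So it adds no information.
\item Bounding $y_w\le y_u$ at a maximizer $u$ of $y$ turns $\rho\sum_{w\in N_H(u)}y_w$ into $(r-1)d(u)\rho\, y_u$. You are left with $(r-1)\rho^2\le(r-1)d(u)\rho+(P-1)d(u)+Q$. Via your two-step inequality you would instead get $(r-1)\rho^2\le Pd(u)+Q$.
\item Either way, $d(u)$ sits where $\rho$ is needed. It cannot be traded for $\rho$, because at a maximizer of $y$ the one-step inequality gives only $\rho\le d(u)$, again the wrong direction.
\end{itemize}
So the argument stops at exactly the step that carries the content of the lemma.

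The missing idea is to use the hypergraph eigenvector only for a comparison. The first-moment computation should be done with an eigenvector for which it is an identity.

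Let $G$ be the $2$-shadow of $H$, so $uv\in E(G)$ iff $v\in N_H(u)$. Your linearized inequality says $(r-1)\rho\, y\le A(G)y$ entrywise, with $y\ge0$ and $y\ne0$. Hence $(r-1)\rho\le y^{T}A(G)y/y^{T}y\le\lambda:=\lambda_1(G)$.

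Now run your summation on $G$ with its nonnegative Perron vector $z$. For this vector, $\sum_w d_G(w)z_w=\mathbf 1^{T}A(G)z=\lambda\,\mathbf 1^{T}z$ holds with equality. By linearity, $N_G(w)=N_H(w)$ and $d_G(w)=(r-1)d_H(w)$. So the hypothesis reads $\sum_{v\in N_G(w)}d_G(v)\le Pd_G(w)+(r-1)Q$, and
\[
\lambda^2\,\mathbf 1^{T}z=\sum_w z_w\sum_{v\in N_G(w)}d_G(v)\le P\lambda\,\mathbf 1^{T}z+(r-1)Q\,\mathbf 1^{T}z .
\]
Thus $(r-1)\rho\le\lambda\le\frac12\bigl(P+\sqrt{P^2+4(r-1)Q}\bigr)$. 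For $Q\ge0$, which is the case used in the paper, this gives $\rho^2-\frac{P}{r-1}\rho-\frac{Q}{r-1}\le0$. No connectivity assumption is needed.
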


\section{Proof of  Theorem \ref{thm:upper}}\label{secproof}
An $r$-graph $H$ is \emph{hm-bipartite} if its vertex set has a partition
$V_1\cup V_2$ such that every edge contains exactly one vertex of $V_1$.
The sets $V_1$ and $V_2$ are called the \emph{head part} and the
\emph{mass part}, respectively. If $|V_1|=m$ and $|V_2|=n$, we call
$H$ an $(m,n)$-hm-bipartite $r$-graph, as in~\cite{FGCH,SFK}.

Suppose that $V_1$ is partitioned into blocks, each of size at most $b$.
A subset of $V_1$ is \emph{transversal} if it meets each block at most
once. A transversal $K_{s,t}^{+}$ has its $s$-part in a transversal subset
of $V_1$ and its $t$-part in $V_2$. All its additional vertices then lie
in $V_2$.

\begin{lemma}\label{lem:basic}
Fix integers $r, t,s\ge 2$, and $b,w\ge 1$.
Let $H$ be an $(m,n)$-hm-bipartite linear $r$-graph satisfying the following properties:
\begin{enumerate}
    \item[(i)] there exists a partition $Q_1,Q_2,\ldots,Q_\ell$ of the head part such that $|Q_i|\le b$ for all $i\in[\ell]$;
    \item[(ii)] every $s$-set of the head part has at most $w$ common neighbors in the mass part;
    \item[(iii)] $H$ contains no copy of a transversal $K_{s,t}^+$.
\end{enumerate}
Then
\begin{equation*}
 e(H)\le\frac{(t-1)^{1/s}}{r-1}mn^{1-1/s}+O\left(m^{2/s}n^{1-2/s^2}\right)+O(n).
\end{equation*}
\end{lemma}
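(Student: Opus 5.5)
We prove Lemma~\ref{lem:basic} by a Kővári--Sós--Turán double count of pairs (mass vertex $u$, transversal $s$-set of its head neighbours). The bound $w$ controls the over-counting coming from the non-generic $s$-sets.

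\textbf{Setup.} For $u\in V_2$ let $D_u\subseteq V_1$ be the set of head vertices adjacent to $u$. Each edge containing $u$ has exactly one head vertex. By linearity distinct edges through $u$ have distinct heads, so $|D_u|=d(u)$. Each edge has exactly $r-1$ mass vertices, hence
\[
(r-1)e(H)=\sum_{u\in V_2}d(u).
\]
So it suffices to show
\[
\sum_u d(u)\le (t-1)^{1/s}mn^{1-1/s}+O\bigl(m^{2/s}n^{1-2/s^2}\bigr)+O(n).
\]
Let $T_u$ be the number of transversal $s$-subsets of $D_u$. Since each block has at most $b$ vertices, a greedy count gives
\[
T_u\ge \frac{1}{s!}\prod_{i=0}^{s-1}\bigl(d(u)-ib\bigr)=\binom{d(u)}{s}-O\bigl(d(u)^{s-1}\bigr).
\]

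\textbf{Upper bound on the count.} We count pairs $(u,S)$ with $S$ a transversal $s$-set in $D_u$. For a fixed transversal $S$, let $c(S)$ be the number of common mass neighbours of $S$. If $c(S)$ is at least $t$ plus a constant, we build a transversal $K_{s,t}^+$ as follows. Each common neighbour $u$ uses the edges $h_{xu}$ for $x\in S$. These edges must be pairwise disjoint outside $S\cup\{u\}$, across different $u$'s. A bad common neighbour is one lying in some $h_{xu'}$ for another chosen $u'$, or one whose edges meet other chosen edges in $V_2$. Linearity and the fact that an edge has only $r-1$ mass vertices mean each chosen $u$ kills only $O(sr)$ others. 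The same extension argument as in Lemma~\ref{lem:extension} therefore gives $c(S)\le w$ for all $S$, which is hypothesis (ii), and a genuine $K^+$ whenever $c(S)$ exceeds $O(srt)$.

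This is too crude: we need $c(S)\le t-1$ for all but a negligible number of $S$. Call $S$ \emph{bad} if $c(S)\ge t$. For a bad $S$, failure of a transversal $K^+_{s,t}$ forces two of its common neighbours $u,u'$ to lie in a common edge $h$, which meets some $h_{xu}$ or $h_{xu'}$ outside $S$. I expect this to show that every bad $S$ contains two vertices $x,y$ that are the heads of edges meeting through a mass vertex structure, i.e.\ a pair with a "second-order" common neighbour. Using linearity, the number of such configurations is at most $O(m^2)$ pairs times $O(m^{s-2})$. More precisely, I would bound $\sum_{S\text{ bad}}c(S)\le w\cdot\#\{\text{bad }S\}$ and show the number of bad $S$ is $O(n\cdot\max d\cdot m^{s-2})$-ish. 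This step is the main obstacle; the target is that it contributes $O(m^{2/s}n^{1-2/s^2})$ after the final root extraction.

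\textbf{Convexity.} We get
\[
\sum_u \binom{d(u)}{s}-O\Bigl(\sum_u d(u)^{s-1}\Bigr)\le (t-1)\binom{m}{s}+w\cdot\#\text{bad}.
\]
Let $d=\frac1n\sum_u d(u)$ and apply Jensen. The error term $\sum_u d(u)^{s-1}\le m^{s-2}\sum_u d(u)$ is absorbed. This yields
\[
n\,\frac{d^s}{s!}\le (t-1)\frac{m^s}{s!}+O\bigl(m^{s-1}n+\ldots\bigr),
\]
so
\[
d\le (t-1)^{1/s}m\,n^{-1/s}+O\bigl(\ldots\bigr).
\]
If $d=O(1)$ the $O(n)$ term covers it. Otherwise we take $s$-th roots, using $(A+B)^{1/s}\le A^{1/s}+B/(sA^{1-1/s})$. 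Multiplying by $n$ gives the claimed $\sum_u d(u)$ bound, and the lower-order terms are balanced into the form $O(m^{2/s}n^{1-2/s^2})$.
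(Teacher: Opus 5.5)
Your skeleton matches the paper's: you double count $\sum_{v\in V_2}\binom{d(v)}{s}$ against $s$-sets of the head part and then take $s$-th roots. But there is a genuine gap exactly at the step you flag as the main obstacle, and that step is the real content of the lemma.

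\textbf{The missing count of bad sets.} Calling $S$ bad when $c(S)\ge t$ just restates what has to be proved. You need a \emph{structural} notion of badness under which good transversal sets automatically satisfy $|N_1(S)|\le t-1$, and under which bad sets can be counted.

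Your description of the obstruction is also inaccurate. Take $T\subseteq N_1(S)$ with $|T|=t$. The edges $h_{ax}$ $(a\in S,\,x\in T)$ fail to form a transversal $K_{s,t}^{+}$ only if (i) $h_{ax}=h_{ay}$ for some $a\in S$ and distinct $x,y\in T$, or (ii) $h_{ax}\cap h_{a'y}$ contains a mass vertex $z\notin\{x,y\}$ for some $a,a'\in S$. In case (ii), $x$ and $y$ need not lie in a common edge at all.

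The paper takes (i)/(ii) as the definition of a bad $s$-set. It then counts bad sets through the family $F$ of bad mass pairs $\{x,y\}$, using three estimates that are absent from your proposal:
\begin{enumerate}
\item[(a)] $|F|\le\binom{r-1}{2}e(H)+(r-1)^2\sum_{z\in V_2}d(z)^2=O(m^2n^{1-2/s})+O(n)$. The second-moment bound comes from H\"older applied to $u_z=\max\{d(z)-s+1,0\}$, together with $\sum_z\binom{d(z)}{s}\le w\binom ms$.
\item[(b)] Each bad pair with $c_{xy}\ge s$ lies in only $O(c_{xy}^{s-1})$ bad $s$-sets. This is because linearity determines $a$ in (i), and determines $a'$ from $(a,z)$ in (ii).
\item[(c)] $\sum_{c_{xy}\ge s}c_{xy}^s=O(m^s)$, which follows from $\sum_{\{x,y\}}\binom{c_{xy}}{s}=\sum_S\binom{|N_1(S)|}{2}\le\binom w2\binom ms$.
\end{enumerate}
H\"older then gives $|Y_2|=O(|F|^{1/s}m^{s-1})$, and $n^{1-1/s}|F|^{1/s}$ is precisely the $O(m^{2/s}n^{1-2/s^2})+O(n)$ error term.

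Your tentative target of $O(n\cdot\max d\cdot m^{s-2})$ bad sets would not suffice. With only $\max d\le m$ available it is $O(nm^{s-1})$. After the root extraction this leaves an error of order $nm^{1-1/s}=(n/m)^{1/s}\cdot mn^{1-1/s}$, which is larger than the main term when $m\ll n$.

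\textbf{The handling of non-transversal sets.} A second, smaller flaw is how you discard these sets. You lower-bound $T_u\ge\binom{d(u)}{s}-O(d(u)^{s-1})$ vertex by vertex and then use $\sum_u d(u)^{s-1}\le m^{s-2}\sum_u d(u)$. This is too lossy. Near the extremal value $\sum_u d(u)\approx mn^{1-1/s}$, the correction is of order $m^{s-1}n^{1-1/s}$. That is not $o(m^s)$ unless $m\gg n^{1-1/s}$, yet the lemma must hold for all $m$; in its application, $m$ is of the order of a vertex degree, which can be far below $n^{1-1/s}$.

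The paper instead sums over all $s$-sets, $\sum_{v\in V_2}\binom{d(v)}{s}=\sum_{S\in\binom{V_1}{s}}|N_1(S)|$, and charges the non-transversal ones globally. There are $|Y_1|=O(m^{s-1})$ of them, each with at most $w$ common neighbours, so in total they cost only $O(m^{s-1})$. A per-vertex version also works if you use $T_u\ge\binom{d(u)-(s-1)(b-1)}{s}$ and then the same truncation-plus-H\"older step as for $u_v$.
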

\begin{proof}
Let $V_1$ and $V_2$ be the head and mass parts. If a part is empty, the result is immediate. If $m<s$, then each vertex of $V_2$ has degree at
most $m$, and hence $(r-1)e(H)\le mn=O(n)$. We may therefore assume $m\ge s$ and $n\ge1$.

Let $u_v=\max\{d(v)-s+1, 0\}$ for any $v\in V_2$. Then,  
$d(v)\leq (s-1)+u_v$ and 
\begin{equation}\label{uv}
\begin{aligned}
u^s_v\leq s!\binom{d(v)}{s}.
\end{aligned}
\end{equation}
Hence,  combining these two bounds with H\"older's inequality, we obtain
\begin{equation}\label{edgeupper}
\begin{aligned}
(r-1)e(H)&=\sum_{v\in V_2}d(v)\leq (s-1)n+\sum_{v\in V_2}u_v\\
&\leq (s-1)n+n^{1-1/s}\left(\sum_{v\in V_2}u_v^s\right)^{1/s}\\
&\leq (s-1)n+n^{1-1/s}\left(s!\sum_{v\in V_2}\binom{d(v)}{s}\right)^{1/s}
\end{aligned}
\end{equation}
Let 
$$A=\{(v,\{v_1, v_2, \ldots, v_s \}):v \in V_2, \{v_1, \ldots, v_s \}\subset N_H(v) \cap  V_1\}.$$
Then,  by double counting, we have 
\begin{equation}\label{doublecounting}
\begin{aligned}
\sum_{v \in V_2}\binom{d(v)}{s}=|A|=\sum_{S \in \binom{V_1}{s}} |N_1(S)|.
\end{aligned}
\end{equation} 
Combining \eqref{edgeupper} and \eqref{doublecounting}, we obtain
\begin{equation}\label{edge}
\begin{aligned}
(r-1)e(H)\leq (s-1)n+n^{1-1/s}\left(s!\sum_{S \in \binom{V_1}{s}} |N_1(S)|\right)^{1/s}.
\end{aligned}
\end{equation}
To finish the proof, we derive an upper bound for $\sum_{S \in \binom{V_1}{s}} |N_1(S)|$.

An $s$-set $S \in \binom{V_1}{s}$ is \textbf{bad} if there exist distinct vertices $x,y\in N_1(S)$ such that at least one of the following holds:
\begin{enumerate}
    \item[(i)] there exists $a\in S$ such that $h_{ax}=h_{ay}$;
    \item[(ii)] there exist $a,a'\in S$ and $z\in V_2\setminus\{x,y\}$ such that $z\in h_{ax}\cap h_{a'y}$.
\end{enumerate}
Otherwise, we call $S$  \textbf{good}. %If $S \in \binom{V_1}{s}$ is good, it is clear that $h_{uv}$ are pairwise distinct for all $u\in N_1(S)$ and $v \in N_1(S)$
Note that $H$ contains no copy of a transversal $K_{s,t}^+$. If $S\in \binom{V_1}{s}$ is both  transversal and good, then $|N_1(S)|\leq t-1$. Indeed, if $|N_1(S)|\geq t$, choose $T\subseteq N_1(S)$ with $|T|=t$.
Then, for \((a,x)\in S\times T\),
the sets \(h_{ax}\setminus\{a,x\}\) are pairwise disjoint subsets of $V_2\setminus T$. Hence these hyperedges form a transversal copy of \(K_{s,t}^{+}\), a contradiction.
Let $Y_1$ denote the family of all non-transversal members of $\binom{V_1}{s}$, and $Y_2$ the family of all bad members of $\binom{V_1}{s}$.  Then 
\begin{equation}\label{ssum}
\begin{aligned}
\sum_{S \in \binom{V_1}{s}} |N_1(S)|&=\sum_{S\notin Y_1\cup Y_2} |N_1(S)|+\sum_{S\in Y_1\cup Y_2} |N_1(S)|\\
&\leq (t-1)\left(\binom{m}{s}-|Y_1\cup Y_2|\right)+w|Y_1\cup Y_2|\\
&\leq (t-1)\binom{m}{s}+w|Y_1\cup Y_2|
\end{aligned}
\end{equation}

Next we give an upper bound on $|Y_1|$. To count these sets, we first choose two vertices in a common block $Q_i$, where $i\in[\ell]$, and then choose the remaining \(s-2\) vertices. This yields
\begin{equation}\label{Y1}
\begin{aligned}
|Y_1|&\leq \sum_{i=1}^{\ell}\binom{|Q_i|}{2}\binom{m-2}{s-2}=\sum_{i=1}^{\ell}\frac{|Q_i|-1}{2}|Q_i|\binom{m-2}{s-2}\\
&\leq\frac{b-1}{2}\binom{m-2}{s-2}\sum_{i=1}^{\ell}|Q_i|=\frac{b-1}{2}\binom{m-2}{s-2}m\\
&\leq O(m^{s-1}).
\end{aligned}
\end{equation}

For distinct $x,y\in V_2$, let
$C_{xy}=N_H(x)\cap N_H(y)\cap V_1$ and $c_{xy}=|C_{xy}|$. We call a pair $\{x,y\} \in \binom{V_2}{2}$ bad if at least one of the following holds:
\begin{enumerate}
    \item[(i)] there exists  $a\in C_{xy}$ satisfying  $h_{ax}=h_{ay}$;
    \item[(ii)] there exist $a,a'\in C_{xy}$ and $z\in V_2\setminus\{x,y\}$ such that  $z\in h_{ax}\cap h_{a'y}$.
\end{enumerate}
Let $F$ denote the family of all bad pairs in $\binom{V_2}{2}$. By a simple counting argument, the number of pairs satisfying condition {\rm (i)} is at most $\binom{r-1}{2}e(H),$ while the number of pairs satisfying condition {\rm (ii)} is at most $(r-1)^2\sum_{z\in V_2}d^2(z).$
Consequently,
\begin{equation}\label{F}
\begin{aligned}
|F|\le\binom{r-1}{2}e(H)+(r-1)^2\sum_{z\in V_2}d^2(z).
\end{aligned}
\end{equation}
Moreover, it follows that
\begin{equation}\label{d2}
\begin{aligned}
 \sum_{z\in V_2}d(z)^2&\leq\sum_{z\in V_2}(s-1+u_z)^2\leq 2(s-1)^2n+2\sum_{z\in V_2}u_z^2\\
 &\leq 2(s-1)^2n+2n^{1-2/s}\left(\sum_{z\in V_2}u_z^s\right)^{2/s}\\
 &\leq 2(s-1)^2n+2n^{1-2/s}\left(s!w\binom{m}{s}\right)^{2/s}\\
 &=O(m^2n^{1-2/s})+O(n),
 \end{aligned}
\end{equation}
where the second inequality follows from Jensen's inequality, the third inequality follows from  H\"older's inequality and 
the last inequality follows from 
\eqref{uv}, \eqref{doublecounting} and condition~(ii) of the lemma.
Hence, combining \eqref{F} and \eqref{d2}, we get
\begin{equation}\label{Fe}
\begin{aligned}
|F|&\leq \binom{r-1}{2}e(H)+(r-1)^2\sum_{z\in V_2}d^2(z)\\
&=\binom{r-1}{2}\frac{\sum_{z\in V_2}d(z)}{r-1}+(r-1)^2\sum_{z\in V_2}d^2(z)\\
&\leq \left(\frac{r-2}{2}+(r-1)^2\right)\sum_{z\in V_2}d(z)^2=O(m^2n^{1-2/s})+O(n).
 \end{aligned}
\end{equation}

For every bad $S\in \binom{V_1}{s}$, there exist distinct
$x,y\in N_1(S)$ such that $\{x,y\}\in F$ and $c_{xy}\geq s$.
Fix a pair $\{x,y\}\in F$ with $c_{xy}\geq s$.
If $\{x,y\}$ satisfies condition {\rm (i)}, then the number of bad
$s$-sets arising from condition {\rm (i)} is at most
\[
\binom{c_{xy}-1}{s-1}.
\]
For condition {\rm (ii)}, we only need to consider $a\neq a'$, since
$a=a'$ implies condition {\rm (i)} by the linearity of $H$.
For any $a\in C_{xy}$ and $z\in h_{ax}\setminus\{a,x\}$, if there
exists an edge of $H$ containing both $z$ and $y$, then $a'$ is
uniquely determined. Hence, the number of bad $s$-sets arising from
condition {\rm (ii)} with $a\neq a'$ is at most
\[
c_{xy}(r-2)\binom{c_{xy}-2}{s-2}.
\]
Therefore,
\begin{equation}\label{Y2upper}
\begin{aligned}
|Y_2|&\leq \sum_{\substack{\{x,y\}\in F \\ c_{xy}\geq s}}\left[\binom{c_{xy}-1}{s-1}+c_{xy}(r-2)\binom{c_{xy}-2}{s-2}\right]\\
&\leq O\left(\sum_{\substack{\{x,y\}\in F \\ c_{xy}\geq s}}c_{xy}^{s-1}\right)\leq O\left(|F|^{1/s}\left(\sum_{\substack{\{x,y\} \in \binom{V_2}{2} \\ c_{xy}\geq s}}c_{xy}^s\right)^{(s-1)/s}\right).
\end{aligned}
\end{equation}
Let 
$$B=\left\{\big(S,\{x, y \}\big): S \in \binom{V_1}{s}, \{x, y \}\subset N_1(S) \right\}.$$
Then,  by double counting, we have 
\begin{equation}\label{doublecounting2}
\begin{aligned}
\sum_{\{x,y\} \in \binom{V_2}{2}}\binom{c_{xy}}{s}=|B|=\sum_{S \in \binom{V_1}{s}} \binom{|N_1(S)|}{2}
\leq \binom{w}{2}\binom{m}{s}.
\end{aligned}
\end{equation}
Note that for an integer $c\ge s$, 
\begin{equation}\label{fact}
\begin{aligned}
\binom{c}{s}=\prod_{i=0}^{s-1}\frac{c-i}{s-i}\geq\left(\frac{c}{s}\right)^s
\end{aligned}
\end{equation}
Combining \eqref{doublecounting2} and \eqref{fact}, we obtain 
\begin{equation}\label{Cs}
\begin{aligned}
\sum_{\substack{\{x,y\} \in \binom{V_2}{2} \\ c_{xy}\geq s}}c_{xy}^s\leq O(m^s).
\end{aligned}
\end{equation}
Hence, by \eqref{Y2upper} and \eqref{Cs}, we have
\begin{equation}\label{Y2}
\begin{aligned}
|Y_2|&\leq O\bigg(|F|^{1/s}m^{s-1}\bigg).
\end{aligned}
\end{equation}

Now, combining \eqref{edge}, \eqref{ssum}, \eqref{Y1}, \eqref{Fe} and \eqref{Y2}, we get
\begin{equation*}
\begin{aligned}
(r-1)e(H)&\leq(s-1)n+n^{1-1/s}\left((t-1)m^s+O\bigl((1+|F|^{1/s})m^{s-1}\bigl)\right)^{1/s}\\
&\leq (s-1)n+n^{1-1/s}\left((t-1)^{1/s}m+O(1+|F|^{1/s})\right)\\
&\leq (s-1)n+n^{1-1/s}\left((t-1)^{1/s}m+O\Bigl((m^2n^{1-2/s}+n)^{1/s}\Bigl)\right)\\
&\leq(t-1)^{1/s}mn^{1-1/s}+O\bigg(m^{2/s}n^{1/s-2/s^2}+n^{1/s}\bigg)n^{1-1/s}+O(n)\\
&= (t-1)^{1/s}mn^{1-1/s}+O\bigg(m^{2/s}n^{1-2/s^2}\bigg)+O(n),
\end{aligned}
\end{equation*}
where the second inequality follows from the concavity of
$f(x)= x^{1/s}$ for $x>0$, which gives
$(a+b)^{1/s}\le a^{1/s}+\frac{1}{s}a^{1/s-1}b$
for $a>0$ and $b\ge0$, and
the last inequality follows from
$(a+b)^{1/s}\le a^{1/s}+b^{1/s}$ for $a,b\ge0$. Hence,
\begin{equation*}
\begin{aligned}
e(H)&\leq \frac{(t-1)^{1/s}}{r-1}mn^{1-1/s}+O\bigg(m^{2/s}n^{1-2/s^2}\bigg)+O(n),
\end{aligned}
\end{equation*}
completing the proof.
\end{proof}

%\section{A recursive bipartite estimate}\label{sec:recursion}
%The following proof adapts the induction in She, Fan and Kang~\cite[Theorem~3.5]{SFK} to transversal expansions. We delete the mass neighbors of a chosen vertex in order to extend a smaller expansion by that vertex. Lemma~\ref{lem:loss} bounds the resulting loss of edges.

\begin{lemma}\label{lem:recursive-bipartite}
Fix integers $r\geq 3, s, t\ge 2$, and $b,w\ge 1$.
Let $H$ be an $(m,n)$-hm-bipartite linear $r$-graph
satisfying the following properties:
\begin{enumerate}
    \item[(i)]  there exists a partition $Q_1,Q_2,\ldots,Q_\ell$ of the head part such that $|Q_i|\le b$ for all $i\in[\ell]$;
    \item[(ii)] every $s$-set of $V(H)$ has at most $w$ common neighbors;
    \item[(iii)] $H$ contains no copy of a transversal $K_{s,t}^{+}$.
\end{enumerate}
Then, for every integer $0\le k\le t-2$,
\begin{equation*}
\begin{aligned}
e(H)\le {}&
\frac{(t-k-1)^{1/s}}{r-1}mn^{1-1/s}+O\Bigl(
 m^{2-1/(s-1)}\Bigr)
 +O\Bigl(m^{2/s}n^{1-2/s^2+k(s-2)/s^2}\Bigr)+O\Bigl(n^{1+k/s}
\Bigr).
\end{aligned}
\end{equation*}
\end{lemma}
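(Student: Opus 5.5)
We argue by induction on $k$. The case $k=0$ is exactly Lemma~\ref{lem:basic}, whose bound is dominated by the claimed one: condition (ii) here implies condition (ii) of Lemma~\ref{lem:basic}. So assume $1\le k\le t-2$ and that the statement holds for $k-1$, for all hm-bipartite linear $r$-graphs satisfying (i)--(iii) with the same $b,w$ but possibly different parameters $t$. We follow the induction of She, Fan and Kang, adapted to transversal expansions.

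\emph{Reduction step.} Fix a head vertex $a\in V_1$, say $a\in Q_i$. Form $H'$ from $H$ by deleting $a$, all other vertices of the block $Q_i$, and every edge that meets $N_H(a)\cap V_2$. Suppose $H'$ contained a transversal $K_{s-1,t-1}^{+}$, or more usefully a transversal $K_{s,t-1}^+$ whose $t$-part lies in $N_H(a)$.

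The recursion we actually want lowers $t$. So we instead consider the subgraph $H_a$ spanned by the edges of $H-a$ that touch $N_H(a)\cap V_2$ only in their mass vertex. More precisely, $H_a$ consists of the edges $e\not\ni a$ with $|e\cap N_H(a)|=1$, that common vertex being the mass vertex lying in $V_2$, with the block $Q_i$ removed.

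If $H_a$ contained a transversal $K_{s,t-1}^{+}$ on mass vertices $T\subseteq N_H(a)$, we could add $a$ together with the edges $h_{ax}$, $x\in T$. This does not yet give a $K_{s+1,\cdot}$; so the right move is to rebuild the argument inside Lemma~\ref{lem:basic} rather than add a head vertex. For a good transversal $s$-set $S$ we have $|N_1(S)|\le t-1$. Deleting neighbours of chosen vertices is what lowers $t-1$ to $t-k-1$.

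Concretely, I would redo the count $\sum_S|N_1(S)|$. Sort the mass vertices by degree and delete the edges of the $k$ highest-degree structures. Equivalently, take the densest configuration, remove it, and show that in the remaining graph every good transversal $s$-set has at most $t-k-1$ common neighbours outside a removed set. Otherwise the removed vertices would extend it to $t$ common neighbours, giving a transversal $K_{s,t}^+$.

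\emph{Loss of edges.} The edges destroyed in one reduction step are those meeting $N_H(a)$ in at least two vertices. By Lemma~\ref{lem:loss} (condition (ii) gives the $K^+_{s,t}$-freeness needed in its proof, namely bounded common neighbourhoods), their number is $O(d(a)^{2-1/(s-1)})=O(m^{2-1/(s-1)})$, since $d(a)\le m$ up to constants in a linear hm-bipartite graph.

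\emph{Error terms.} Each induction step multiplies the Hölder/degree error by a factor $n^{(s-2)/s^2}$ and the $O(n)$ term by $n^{1/s}$. These factors come from the $O(n^{1/s})$ extra common neighbours per $s$-set created when we pass to the subgraph of mass vertices with large degree. Finally, we apply Lemma~\ref{lem:basic} with $t$ replaced by $t-k$ to the cleaned graph and add back the $O(m^{2-1/(s-1)})$ deleted edges.

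\emph{Main obstacle.} The hardest step is making the reduction $t-1\to t-k-1$ rigorous: we must show that after deleting $N_H(a)$ for appropriately chosen head vertices, good transversal $s$-sets lose one common neighbour per step. At the same time, the number of remaining bad or oversized sets must grow only by the stated powers of $n$. I would try first a direct averaging argument choosing $a$ of maximum degree, and track the bad-pair family $F$ exactly as in \eqref{Fe}--\eqref{Y2}.
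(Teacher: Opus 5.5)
\textbf{There is a genuine gap.} You flag the reduction from $t-1$ to $t-k-1$ as the main obstacle, and it is never carried out. The mechanism you sketch cannot work. A removed vertex $x$ can extend a transversal $K_{s,t-1}^+$ on an $s$-set $S$ only if $x$ is a common neighbour of $S$. So deleting the neighbourhood of one chosen vertex constrains only the $s$-sets lying inside that neighbourhood. No global deletion forces \emph{every} good transversal $s$-set down to at most $t-k-1$ common neighbours, since a good transversal $S$ may well have $t-1$ of them. The gain in the leading constant is an averaged effect, not a pointwise one.

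Two further points are unsupported or wrong. Your explanation of the error factors $n^{(s-2)/s^2}$ and $n^{1/s}$ via ``extra common neighbours'' is not derived from anything. The claim $d(a)\le m$ is false for a head vertex $a$, whose degree can be as large as $n/(r-1)$. It is \emph{mass} vertices that have degree at most $m$: by linearity, the edges through a mass vertex have distinct heads.

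\textbf{The missing idea} is to localize at a mass vertex $u\in V_2$ rather than a head vertex. You correctly noticed that adding a head vertex raises $s$ instead of lowering $t$.
\begin{itemize}
\item Let $H_u$ be the subhypergraph induced on $(N_H(u)\cap V_1)\cup\bigl(V_2\setminus(N_H(u)\cup\{u\})\bigr)$. Its head part has size $d(u)$, and it inherits (i) and (ii).
\item $H_u$ has no transversal $K_{s,t-1}^+$. Otherwise $u$, together with $h_{uv_1},\dots,h_{uv_s}$ (whose mass vertices lie in $N_H(u)\cup\{u\}$), would enlarge the $t$-part.
\item The induction hypothesis with parameters $(t-1,k-1)$ gives $e(H_u)\le\frac{(t-k-1)^{1/s}}{r-1}d(u)n^{1-1/s}$ plus errors.
\item Corollary~\ref{CorGao} gives $e(H_u)\ge\sum_{v\in N_H(u)\cap V_1}d(v)-O(d(u)^{2-1/(s-1)})$, because every lost edge meets $N_H(u)$ in at least two vertices.
\item Sum over $u\in V_2$ and use $\sum_{u\in V_2}\sum_{v\in N_H(u)\cap V_1}d(v)=(r-1)\sum_{v\in V_1}d(v)^2\ge (r-1)e(H)^2/m$. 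This yields a quadratic-type inequality in $e(H)$.
\item Assume $e(H)>\frac{(t-k-1)^{1/s}}{r-1}mn^{1-1/s}$ and divide by $e(H)$. This produces exactly the stated error terms:
  \begin{itemize}
  \item $O(m^{2-1/(s-1)})$, from $d(u)\le m$;
  \item $O(m^{2/s}n^{1-2/s^2+k(s-2)/s^2})$, from H\"older on $\sum_u d(u)^{2/s}$;
  \item $O(n^{1+k/s})$.
  \end{itemize}
\end{itemize}
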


\begin{proof}
Let $V_1$ and $V_2$ be the head and mass parts of $H$, respectively,
and write $d(v)=d_H(v)$. The case $e(H)=0$ is immediate, so assume
$e(H)>0$ and hence $m,n\ge 1$. 
%Throughout the proof, $C$ denotes a positive constant depending only on $r,s,t,b,w,k$.

We proceed by induction on $k$. For $k=0$, the assertion is given by Lemma~\ref{lem:basic}. Now let $k\ge 1$ and assume that the assertion
holds for all smaller values of $k$ and all corresponding
values of $t$.

Fix $u\in V_2$ and let
$V_{1,u}=N_H(u)\cap V_1$ and
$V_{2,u}=V_2\setminus\bigl(N_H(u)\cup\{u\}\bigr).$
Let $H_u=H[V_{1,u}\cup V_{2,u}]$.
Since $H$ is linear and hm-bipartite, $|V_{1,u}|=d(u)$.
Thus $H_u$ is a $\bigl(d(u),|V_{2,u}|\bigl)$-hm-bipartite linear
$r$-hypergraph. The nonempty sets $Q_i\cap V_{1,u}$,
$i\in[\ell]$, form a partition of its head part, each of size
at most $b$. Every $s$-set of $V(H_u)$ has at most
$w$ common neighbors in $H_u$, since $H_u$ is a subhypergraph
of $H$.

We claim that $H_u$ contains no copy of a transversal
$K_{s,t-1}^{+}$.
Indeed, suppose that such a copy has head core vertices
$v_1,\ldots,v_s$. Clearly, $h_{uv_1}, h_{uv_2}, \ldots ,h_{uv_s}$ are distinct edges that intersect pairwise only at $u$ and contain no other head vertices.
All mass vertices of these edges belong to $N_H(u)\cup\{u\}$ and hence are not in $V_{2,u}$. Adding these edges to the given copy yields a transversal $K_{s,t}^{+}$ in $H$, a contradiction.

Since $k\geq 1$ and $t-2\geq k$, we have $t-1\geq 2$. Thus the parameters $t-1$ and $k-1$ are admissible.
Applying the induction hypothesis to $H_u$ with parameters $t-1$ and $k-1$, and using $|V_{2,u}|\le n$,
we obtain
\begin{equation}\label{eq:Hu-upper}
\begin{aligned}
e(H_u)&\le\frac{(t-k-1)^{1/s}}{r-1}d(u)n^{1-1/s}+O\Bigl(d(u)^{2-1/(s-1)}\Bigr)\\
&+O\Bigl(d(u)^{2/s}n^{1-2/s^2+(k-1)(s-2)/s^2}\Bigr)+O\Bigl(n^{1+(k-1)/s}\Bigr).
\end{aligned}
\end{equation}

On the other hand, condition~(ii) gives $|N_1(S)|\le w$ for
every $S\in\binom{V(H)}{s}$. Thus $H$ is $K_{s,w+1}^{+}$-free. In particular, $H$ is also $K_{s,w+s}^{+}$-free. As $w+s\ge s$, by Corollary~\ref{CorGao}, we have
\begin{equation}\label{Hu}
\begin{aligned}
\bigl|\{e\in E(H):|e\cap N_H(u)|\ge 2\}\bigr| \le O\Bigl(d(u)^{2-1/(s-1)}\Bigl).
\end{aligned}
\end{equation}
Let $E_1=\{e\in E(H): e\cap V_{1,u}\neq \emptyset, e\notin E(H_u) \}$. Clearly, for any $e\in E_1$, $|e\cap N_H(u)|\ge 2$.
Then, by \eqref{Hu}, we have
\begin{equation}\label{eq:Hu-lower}
\begin{aligned}
e(H_u)&= \sum_{v\in V_{1,u}}d(v)-|E_1|\\
&\ge \sum_{v\in V_{1,u}}d(v)-O\Bigl(d(u)^{2-1/(s-1)}\Bigl).
\end{aligned}
\end{equation}
Combining \eqref{eq:Hu-upper} and \eqref{eq:Hu-lower}, we obtain
\begin{equation}\label{eq:local-degree-sum}
\begin{aligned}
\sum_{v\in V_{1,u}}d(v)\le &
\frac{(t-k-1)^{1/s}}{r-1}d(u)n^{1-1/s}+O\Bigl(d(u)^{2-1/(s-1)}\Bigr)\\
&+O\Bigl(d(u)^{2/s}n^{1-2/s^2+(k-1)(s-2)/s^2}\Bigr)+O\Bigl(n^{1+(k-1)/s}\Bigr).
\end{aligned}
\end{equation}

For $u\in V_2$, we have $d(u)\le m$ and
$\sum_{u\in V_2}d(u)=(r-1)e(H)$. Hence
\[
\begin{aligned}
\sum_{u\in V_2}d(u)^{2-1/(s-1)}
&\le m^{1-1/(s-1)}\sum_{u\in V_2}d(u) \\
&=(r-1)m^{1-1/(s-1)}e(H).
\end{aligned}
\]
By H\"older's inequality, 
\[
\sum_{u\in V_2}d(u)^{2/s}
\le n^{1-2/s}
\left(\sum_{u\in V_2}d(u)\right)^{2/s}
=n^{1-2/s}\bigl((r-1)e(H)\bigr)^{2/s}.
\]
Summing \eqref{eq:local-degree-sum} over $u\in V_2$ and combining these bounds gives
\begin{equation}\label{eq:sum-upper}
\begin{aligned}
\sum_{u\in V_2}\sum_{v\in V_{1,u}}d(v)
\le {}&
(t-k-1)^{1/s}n^{1-1/s}e(H)
+O\Bigl(m^{1-1/(s-1)}\Bigl)e(H) \\
&+O\Bigl(n^{2-2/s-2/s^2+(k-1)(s-2)/s^2})e(H)^{2/s}
+O\Bigl(n^{2+(k-1)/s}\Bigl).
\end{aligned}
\end{equation}
Moreover, it follows that
\begin{equation}\label{eq:sum-lower}
\begin{aligned}
\sum_{u\in V_2}\sum_{v\in V_{1,u}}d(v)
&=\sum_{v\in V_1}\sum_{u\in N_H(v)\cap V_2}d(v) \\
&=(r-1)\sum_{v\in V_1}d(v)^2 \\
&\ge \frac{r-1}{m}
\left(\sum_{v\in V_1}d(v)\right)^2
=\frac{r-1}{m}e(H)^2.
\end{aligned}
\end{equation}
Combining \eqref{eq:sum-upper} and \eqref{eq:sum-lower}, we get
\begin{equation}\label{eq:almost-quadratic}
\begin{aligned}
e(H)^2\le {}&
\left(
\frac{(t-k-1)^{1/s}}{r-1}mn^{1-1/s}
+O\Bigl(m^{2-1/(s-1)}\Bigl)\right)e(H) \\
&+O\Bigl(m n^{2-2/s-2/s^2+(k-1)(s-2)/s^2}\Bigl)e(H)^{2/s}
+O\Bigl(m n^{2+(k-1)/s}\Bigl).
\end{aligned}
\end{equation}

If $e(H)\le \frac{(t-k-1)^{1/s}}{r-1}mn^{1-1/s},$
then this completes the proof. Now suppose $e(H)> \frac{(t-k-1)^{1/s}}{r-1}mn^{1-1/s}$.
Then
\begin{equation}\label{es2}
\begin{aligned}
e(H)^{2/s}
&=e(H)e(H)^{2/s-1} \\
&\le
\left(
\frac{(t-k-1)^{1/s}}{r-1}mn^{1-1/s}
\right)^{2/s-1}e(H) \\
&\le O\Bigl(m^{2/s-1}n^{(1-1/s)(2/s-1)}\Bigl)e(H).
\end{aligned}
\end{equation}
Combining \eqref{eq:almost-quadratic} and \eqref{es2}, we obtain
\begin{align*}
&e(H)-
\frac{(t-k-1)^{1/s}}{r-1}mn^{1-1/s}\\
&\quad\le
O\bigl(m^{2-1/(s-1)}\bigr)
+O\bigl(m^{2/s}n^{1-2/s^2+k(s-2)/s^2}\bigr)
+O\left(\frac{mn^{2+(k-1)/s}}{e(H)}\right)\\
&\quad\le
O\bigl(m^{2-1/(s-1)}\bigr)
+O\bigl(m^{2/s}n^{1-2/s^2+k(s-2)/s^2}\bigr)
+O\bigl(n^{1+k/s}\bigr),
\end{align*}
where the last inequality follows from 
$e(H)> \frac{(t-k-1)^{1/s}}{r-1}mn^{1-1/s}$. This completes the proof.
\end{proof}

\begin{lemma}\label{prop:neighbor}
Let $r\ge3$ and $t\ge s\ge3$ be fixed integers, and let $H$ be a $K_{s,t}^+$-free linear $r$-graph on $n$ vertices.  Then for any $u \in V(H)$,
\begin{equation}\label{eq:neighbor-linear}
\begin{aligned}
\sum_{v\in N_H(u)}d(v)\le {}&
\left((t-s+1)^{1/s}n^{1-1/s}+O(n^{1-1/(s-1)})+O(n^{1-2/s+2/s^2})\right)d(u)\\
&+O(n^{2-3/s+2/s^2}).
\end{aligned}
\end{equation}
\end{lemma}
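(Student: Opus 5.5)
We fix $u$, build an hm-bipartite subhypergraph around $u$, apply Lemma~\ref{lem:recursive-bipartite} with $k=s-1$, and bound the discarded edges with Corollary~\ref{CorGao}.

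\textbf{Construction.} Put $d=d(u)$. Let $V_1=N_H(u)$, so $|V_1|=(r-1)d$. Let $V_2=V(H)\setminus(N_H(u)\cup\{u\})$, so $|V_2|\le n$. Partition $V_1$ into the $d$ blocks $h\setminus\{u\}$ for $h\in E_u$; each block has size $r-1=:b$. Let $H'$ be the hm-bipartite subhypergraph consisting of the edges $e$ with $|e\cap V_1|=1$ and $u\notin e$.

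\textbf{Counting.} By double counting,
\[
\sum_{v\in N_H(u)}d(v)=\sum_{v\in V_1}d(v)\le d+e(H')+(r-1)\bigl|\{e:|e\cap N_H(u)|\ge2\}\bigr|.
\]
Here $d$ counts the edges through $u$, which are counted $r-1$ times in total; this is the same order as the $d$ in the target. By Corollary~\ref{CorGao}, the last term is $O(d^{2-1/(s-1)})$. Since $d\le n$, this is $O(n^{1-1/(s-1)})d$.

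\textbf{Hypotheses of Lemma~\ref{lem:recursive-bipartite}.} Condition (ii) holds with $w=s^2r^2t-1+\binom s2(r-2)$ by Corollary~\ref{eq:common-bound}.

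For (iii), I claim $H'$ has no transversal $K_{s,s-1+(t-s+1)}^+$ that can be extended. Precisely: a transversal $K_{s,t-s+1}^+$ in $H'$ has its $s$ head core vertices $v_1,\dots,v_s$ in distinct edges $h_{uv_i}$ through $u$. Adjoin $u$ as a new $t$-side vertex, using these edges. Their extra vertices lie in $N_H(u)$, which meets the copy only in the $v_i$, and they are pairwise disjoint. This gives a $K_{s,t-s+2}^+$, which is not yet $K_{s,t}$. So one adjoined vertex alone is not enough.

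The better route is to apply the lemma with $t'=t$ directly and $k=s-1$, taking the "$t$" in the lemma to be $t$. Then $(t-k-1)^{1/s}=(t-s)^{1/s}$, which is too small. That suggests the correct choice is: $H'$ is transversal-$K_{s,t-1}^+$-free by the $u$-extension above, and we apply the lemma with parameter $t-1$ and $k=s-2$. Then $(t-1-k-1)=t-s+1$, as desired, and $k\le (t-1)-2$ holds since $t\ge s$. Transversality ensures the $v_i$ lie in distinct $u$-edges, which is exactly what makes the $u$-extension valid.

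\textbf{Error terms.} With $m=(r-1)d$ and $n'\le n$, the lemma gives
\[
e(H')\le \frac{(t-s+1)^{1/s}}{r-1}(r-1)d\,n^{1-1/s}+O(d^{2-1/(s-1)})+O\bigl(d^{2/s}n^{1-2/s^2+(s-2)^2/s^2}\bigr)+O(n^{1+(s-2)/s}).
\]
Two of these terms are immediately fine. As before, $O(d^{2-1/(s-1)})=O(n^{1-1/(s-1)})d$. The middle exponent is $1-2/s^2+(s-2)^2/s^2=2-4/s+2/s^2$.

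\textbf{Main obstacle.} Matching the remaining terms to the target $O(n^{1-2/s+2/s^2})d+O(n^{2-3/s+2/s^2})$ is the delicate part. For the middle term, by Young's inequality,
\[
d^{2/s}X\le d\,Y+O\bigl((X/Y^{2/s})^{s/(s-2)}\bigr).
\]
Taking $Y=n^{1-2/s+2/s^2}$ is meant to make the remainder $O(n^{2-3/s+2/s^2})$; I would verify this exponent arithmetic carefully.

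The term $O(n^{2-2/s})$ exceeds $n^{2-3/s+2/s^2}$ when $s\ge3$, so it cannot simply be absorbed into the additive term. The remedy is to choose $k$ smaller, so that the $O(n^{1+k/s})$ term shrinks, and then recover the leading constant differently. Alternatively, exploit $n^{1+k/s}\le n^{1-2/s+2/s^2}d$ whenever $d$ is large. When $d$ is small, use the trivial bound $\sum_{v\in N_H(u)}d(v)\le (r-1)d\,\Delta$ together with the bound $\Delta=O(n^{1-1/s})$ coming from Corollary~\ref{eq:common-bound}.

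Balancing this case split at the threshold $d\approx n^{1-1/s}$ is the step I expect to require the most care.
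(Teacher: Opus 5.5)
Your construction matches the paper's: $H'$ is exactly the paper's $H_1$ (edge set $E_1(N_H(u))$), with the same blocks $h\setminus\{u\}$. You also use the same $u$-extension to exclude a transversal $K_{s,t-1}^{+}$, condition (ii) from Corollary~\ref{eq:common-bound}, and Corollary~\ref{CorGao} for edges meeting $N_H(u)$ at least twice. The gap is an off-by-one in the choice of $k$. The ``main obstacle'' you then wrestle with is an artifact of it.

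Applying Lemma~\ref{lem:recursive-bipartite} with $t-1$ in place of $t$ gives leading coefficient $((t-1)-k-1)^{1/s}=(t-k-2)^{1/s}$. So $k=s-2$ yields $(t-s)^{1/s}$, not $(t-s+1)^{1/s}$. Moreover, $k=s-2$ needs $k\le (t-1)-2$, i.e.\ $t\ge s+1$, so it is inadmissible when $t=s$. The correct choice, which is the paper's, is $k=s-3$. It is admissible exactly because $0\le s-3\le t-3$, and it gives precisely $(t-s+1)^{1/s}$.

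With $k=s-3$ the remaining error terms are $O(d^{2/s}n^{2-5/s+4/s^2})$ and $O(n^{1+(s-3)/s})=O(n^{2-3/s})$. The second already lies inside $O(n^{2-3/s+2/s^2})$. Your Young step with $Y=n^{1-2/s+2/s^2}$ handles the first (the paper uses the tangent line of $x^{2/s}$ at $x=n^{1-1/s}$, which is equivalent), since $\frac{s}{s-2}\bigl(2-\frac{7}{s}+\frac{8}{s^2}-\frac{4}{s^3}\bigr)=2-\frac{3}{s}+\frac{2}{s^2}$. No case split on $d$ is needed.

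With your $k=s-2$, by contrast, two terms exceed the target additive term $O(n^{2-3/s+2/s^2})$: the term $O(n^{2-2/s})$, and the Young remainder, which is of order $n^{2-2/s+2/s^2}$. So the argument as written does not prove the lemma.

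The proposed rescue also fails. The bound $\Delta=O(n^{1-1/s})$ does not follow from Corollary~\ref{eq:common-bound} and is false in general. A sunflower of edges pairwise meeting only at one vertex is $K_{s,t}^{+}$-free and has a vertex of degree $(n-1)/(r-1)$. Even granting that bound, $(r-1)d\cdot O(n^{1-1/s})$ fits the target only when $d=O(n^{1-2/s+2/s^2})$, since its constant need not be at most $(t-s+1)^{1/s}$. Meanwhile your large-$d$ absorption of $n^{2-2/s}$ needs $d\ge n^{1-2/s^2}$, so the two cases leave an uncovered range.

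A minor, harmless slip: an edge avoiding $u$ can meet $N_H(u)$ in $r$ vertices, so the coefficient in your counting inequality should be $r$, as in the paper.
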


\begin{proof}
For any $u \in V(H)$, by counting the edges incident with each vertex of $N_H(u)$, we have
\begin{equation}\label{eq:neighbor-decomposition}
\sum_{v\in N_H(u)}d(v)
=\sum_{v\in N_H(u)}e_1^v(N_H(u))
 +\sum_{v\in N_H(u)}\sum_{i=2}^r e_i^v(N_H(u)).
\end{equation}
Since $H$ is $K_{s,t}^+$-free, Corollary~\ref{CorGao} gives
\begin{equation}\label{eq:outer-loss}
\begin{aligned}
\sum_{v\in N_H(u)}\sum_{i=2}^r e_i^v(N_H(u))
&=\sum_{i=2}^r i e_i(N_H(u))\\
&\le r\bigl|\{e\in E(H):|e\cap N_H(u)|\ge2\}\bigr|\\
&= O(d(u)^{2-1/(s-1)}).
\end{aligned}
\end{equation}

To complete the proof, we next establish an upper bound for $\sum_{v\in N_H(u)}e_1^v(N_H(u))$.
Let $H_1$ be a subhypergraph of $H$ with
$V(H_1)=V(H)\setminus\{u\}$ and $E(H_1)=E_1(N_H(u)).$
Clearly, $H_1$ is an hm-bipartite linear $r$-graph with head part $V_1=N_H(u)$ and mass part $V_2=V(H) \backslash (N_H(u) \cup \{u\})$.
Then, $|V_1|=(r-1)d(u), |V_2|=n-(r-1)d(u)-1,$
and $e(H_1)=\sum_{v\in N_H(u)}e_1^v(N_H(u)).$
Let $E_u=\{e_1,\dots,e_\ell\}$ with $\ell=d(u)$, and let $Q_i=e_i\setminus\{u\}$ for each $i\in[\ell]$. Then the sets $Q_1,\dots,Q_\ell$ form a partition of $V_1$ into $(r-1)$-element subsets.
Furthermore, we claim that $H_1$ contains no transversal $K_{s,t-1}^{+}$.
Suppose for contradiction that $H_1$ contains a transversal copy of  $K_{s,t-1}^{+}$. Let $v_1,\ldots,v_s$ denote the vertices in the $s$-part of this copy. By definition, these vertices belong to distinct blocks of $V_1$. It is clear that $|h_{uv_i}\cap V_1|=r-1$ for any $i\in[s]$
and all vertices of this copy other than $v_1,\ldots,v_s$ belong to $V_2$. 
Adding the edges $h_{uv_1},\ldots,h_{uv_s}$ to the copy gives a $K_{s,t}^{+}$ in $H$,
a contradiction. Hence, by Corollary~\ref{eq:common-bound} and Lemma \ref{lem:recursive-bipartite}, for every  $0\le k\le t-3$, we have
\begin{equation}
\begin{aligned}
e(H_1)\le {}&
\frac{(t-k-2)^{1/s}}{r-1}|V_1||V_2|^{1-1/s}+O\Bigl(|V_1|^{2-1/(s-1)}\Bigl)\\
&+O\Bigl(|V_1|^{2/s}|V_2|^{1-2/s^2+k(s-2)/s^2}\Bigl)
+O\Bigl(|V_2|^{1+k/s}\Bigr).
\end{aligned}
\end{equation}
Taking $k=s-3$, we obtain
\begin{equation}\label{eq:outer-bipartite}
\begin{aligned}
e(H_1)\le {}&
\frac{(t-s+1)^{1/s}}{r-1}|V_1||V_2|^{1-1/s}+O\Bigl(|V_1|^{2-1/(s-1)}\Bigl)\\
&+O\Bigl(|V_1|^{2/s}|V_2|^{2-5/s+4/s^2}\Bigl)
       +O\Bigl(|V_2|^{2-3/s}\Bigr).
\end{aligned}
\end{equation}

Now, combining \eqref{eq:neighbor-decomposition}, \eqref{eq:outer-loss}
and \eqref{eq:outer-bipartite},
we obtain
\begin{equation}\label{eq:outer-raw}
\begin{aligned}
\sum_{v\in N_H(u)}d(v)\le {}&
(t-s+1)^{1/s}d(u)\bigl(n-(r-1)d(u)\bigr)^{1-1/s}\\
&+O\Bigl(d(u)^{2-1/(s-1)}\Bigl)+O\Bigl(n^{2-5/s+4/s^2}d(u)^{2/s}\Bigl)+O\Bigl(n^{2-3/s}\Bigl)\\
&\leq(t-s+1)^{1/s}n^{1-1/s}d(u)+O(n^{1-1/(s-1)})d(u)\\
&+O\bigg(n^{2-5/s+4/s^2}d(u)^{2/s}\bigg)+O(n^{2-3/s}).
\end{aligned}
\end{equation}
The function $f(x)=x^{2/s}$ is concave for $x>0$, so its tangent line at any $q>0$ gives $f(x)\leq f(q)+f'(q)(x-q)$. Taking $x=d(u)$ and $q=n^{1-1/s}$, we obtain
\[
d(u)^{2/s}
\le\frac2s n^{-1+3/s-2/s^2}d(u)
  +\left(1-\frac2s\right)n^{2/s-2/s^2}.
\]
Multiplying both sides by $n^{2-5/s+4/s^2}$, we get
\begin{equation}\label{eq:tangent}
n^{2-5/s+4/s^2}d(u)^{2/s}
\le\frac2s n^{1-2/s+2/s^2}d(u)
 +\left(1-\frac2s\right)n^{2-3/s+2/s^2}.
\end{equation}

Finally, combining \eqref{eq:outer-raw} and
\eqref{eq:tangent}, we have
\begin{equation*}
\begin{aligned}
\sum_{v\in N_H(u)}d(v)\le {}&
\left((t-s+1)^{1/s}n^{1-1/s}+O(n^{1-1/(s-1)})+O(n^{1-2/s+2/s^2})\right)d(u)\\
&+O(n^{2-3/s+2/s^2}),
\end{aligned}
\end{equation*}
which completes the proof.
\end{proof}

\begin{proof}[Proof of Theorem~\ref{thm:upper}]
Let $H$ be a $K_{s,t}^{+}$-free linear $r$-graph on $n$ vertices, and let
$A=(t-s+1)^{1/s}n^{1-1/s}.$
By Lemmas~\ref{lem:quadratic} and \ref{prop:neighbor}, we have
$$\rho(H)^2
\leq
\frac{
A+O\bigl(n^{1-1/(s-1)}\bigr)
+O\bigl(n^{1-2/s+2/s^2}\bigr)
}{r-1}\rho(H)
+O\bigl(n^{2-3/s+2/s^2}\bigr).$$
If $\rho(H)\leq \frac{A}{r-1},$
this completes the proof. Thus, we may assume that
$\rho(H)>\frac{A}{r-1}.$
Dividing the above inequality by $\rho(H)$ and rearranging, we obtain
\[
\begin{aligned}
\rho(H)
\leq &\frac{A}{r-1}+
O\bigl(n^{1-1/(s-1)}\bigr)
+O\bigl(n^{1-2/s+2/s^2}\bigr)+O\left(
\frac{n^{2-3/s+2/s^2}}{\rho(H)}
\right)\\
\leq& \frac{(t-s+1)^{1/s}}{r-1}n^{1-1/s}
+O\bigl(n^{1-1/(s-1)}\bigr)
+O\bigl(n^{1-2/s+2/s^2}\bigr),
\end{aligned}
\]
where the last inequality follows from 
$\rho(H)>
\frac{A}{r-1}
=\frac{(t-s+1)^{1/s}}{r-1}n^{1-1/s}$. This completes the proof.

% Let $P=(t-s+1)^{1/s}n^{1-1/s}+O(n^{1-1/(s-1)})+O(n^{1-2/s+2/s^2})$ and $Q=O(n^{2-3/s+2/s^2})$. Then, 
% by Lemmas~\ref{lem:quadratic} and \ref{prop:neighbor}, it follows that
% \[
%  \rho(H)^2-\frac{P}{r-1}\rho(H)-\frac{Q}{r-1}\le0.
% \]
% Hence,
% \begin{align*}
% \rho(H)
% &\leq
% \frac{P+\sqrt{P^2+4(r-1)Q}}{2(r-1)}\leq
% \frac{P}{r-1}
% +\sqrt{\frac{Q}{r-1}}\\
% &\leq
% \frac{(t-s+1)^{1/s}}{r-1}n^{1-1/s}
% +O\bigl(n^{1-1/(s-1)}\bigr)
% +O\bigl(n^{1-3/(2s)+1/s^2}\bigr),
% \end{align*}
% as desired.
\end{proof}

\section{Proof of  Theorem \ref{thm:general-uniformity}}\label{sec:general-lower}
We first define the norm graph and state the properties
used in the proof.

Let $s\ge3$ be a fixed integer, and let $q$
be an odd prime power. Let
$\Nm:\F_{q^{s-1}}\to\F_q$ be the field norm
defined by
$\Nm(A)=A^{1+q+\cdots+q^{s-2}}.$
The norm graph $G_q$ of Alon, R\'onyai and
Szab\'o~\cite{ARS} has vertex set
$V(G_q)=\F_{q^{s-1}}\times\F_q^*.$ Write $n_q=|V(G_q)|$. Then
$n_q=q^{s-1}(q-1).$
Two distinct vertices $(A,u)$ and $(B,v)$
are adjacent if and only if
$\Nm(A+B)=uv.$

We use the following properties of $G_q$.

\begin{lemma}[\cite{ARS}]
\label{lem:quotient-norm}
The graph $G_q$ is $K_{s,t}$-free for
$t>(s-1)!$. Moreover,
\begin{equation*}
e(G_q)=\left(\frac12+o(1)\right)
n_q q^{s-1}.
\end{equation*}
\end{lemma}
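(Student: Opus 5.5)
This lemma is cited from Alon, R\'onyai and Szab\'o~\cite{ARS}. I would reprove its two parts separately: first the $K_{s,t}$-freeness by an algebraic counting argument, then the edge count by a direct count of solutions to the adjacency equation.

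\emph{Freeness.} Fix $s$ distinct vertices $(B_1,v_1),\ldots,(B_s,v_s)$. A common neighbour $(A,u)$ satisfies
\[
\Nm(A+B_i)=uv_i \quad\text{for all } i\in[s].
\]
Dividing the equations for $i\ge2$ by the one for $i=1$ eliminates $u$, which leaves $s-1$ equations
\[
\Nm(A+B_i)\,v_1=\Nm(A+B_1)\,v_i, \qquad i=2,\ldots,s,
\]
in the single unknown $A\in\F_{q^{s-1}}$. Over $\overline{\F_q}$ I would view $\F_{q^{s-1}}$ as $(s-1)$-dimensional, writing $\Nm(X)=\prod_{j=0}^{s-2}X^{q^j}$ and using the variables $X_j=A^{q^j}$. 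The system then becomes $s-1$ polynomial equations in $s-1$ variables. Next I would invoke the ARS lemma: if the $B_i$ are distinct, the system
\[
\prod_{j}(X_j-b_{ij})=c_i, \qquad i=1,\ldots,m,
\]
has at most $m!$ solutions. Its proof is by Gaussian elimination on the monomial-symmetric structure plus a B\'ezout-type count. Applied here with $m=s-1$, it gives at most $(s-1)!$ choices of $A$, and each $A$ determines $u$. Hence $|N(S)|\le (s-1)!<t$.

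The degenerate case must be treated separately. If two of the $B_i$ coincide, say $B_1=B_2$ with $v_1\ne v_2$, then $\Nm(A+B_1)u$ would have to equal both $v_1$ and $v_2$, so there are no common neighbours. I expect the algebraic lemma on the number of solutions to be the main obstacle; I would quote it from~\cite{ARS} rather than reprove the elimination argument.

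\emph{Edge count.} For each $(A,u)$ and each $B\ne -A$, the element $\Nm(A+B)$ lies in $\F_q^*$, so $v=\Nm(A+B)/u$ is determined. This gives $q^{s-1}-1$ neighbours, minus at most one loop, which occurs when $B=A$ and $\Nm(2A)=u^2$. Therefore every vertex has degree $q^{s-1}-O(1)$, and
\[
e(G_q)=\tfrac12\, n_q\bigl(q^{s-1}-O(1)\bigr)=\bigl(\tfrac12+o(1)\bigr)n_q q^{s-1}.
\]
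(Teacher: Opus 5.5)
Your edge count is correct and complete: every vertex has degree $q^{s-1}-1$ or $q^{s-1}-2$, which gives the stated asymptotics. The paper gives no proof of this lemma and simply cites \cite{ARS}. Your freeness argument follows the ARS outline, but it has a gap at the step ``Applied here with $m=s-1$''.

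After eliminating $u$, you have the equations $\Nm(A+B_i)v_1=\Nm(A+B_1)v_i$. In the variables $X_j=A^{q^j}$, each one reads $v_1\prod_j(X_j+B_i^{q^j})-v_i\prod_j(X_j+B_1^{q^j})=0$. This is a difference of two products, not a single product $\prod_j(X_j-b_{ij})$ set equal to a constant. So the Kollár--Rónyai--Szabó counting lemma you intend to quote does not apply to this system as written. A plain Bézout count would give at best $(s-1)^{s-1}$, and even that only after ruling out positive-dimensional components. Neither yields $(s-1)!$.

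The missing idea is the projective change of variable that ARS use. Since $\Nm(A+B_1)=uv_1\neq0$, you may set $Y=(A+B_1)^{-1}$. Multiplicativity of the norm then gives
\[
\frac{v_i}{v_1}=\frac{\Nm(A+B_i)}{\Nm(A+B_1)}=\Nm\bigl(1+(B_i-B_1)Y\bigr)=\Nm(B_i-B_1)\,\Nm(Y+D_i),\qquad D_i=(B_i-B_1)^{-1}.
\]
The system therefore becomes $\Nm(Y+D_i)=v_i/\bigl(v_1\Nm(B_i-B_1)\bigr)$ for $i=2,\ldots,s$. In the variables $Y^{q^j}$ this is exactly of product form, with $b_{ij}=-D_i^{q^j}$. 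For each fixed $j$ these are pairwise distinct, because the $D_i$ are distinct and Frobenius is injective.

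The lemma then bounds the number of tuples $(Y,Y^q,\ldots,Y^{q^{s-2}})$, and hence the number of $Y$, by $(s-1)!$. Each $Y$ determines $A=Y^{-1}-B_1$, and then $u=\Nm(A+B_1)/v_1$.

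Finally, fix the small slip in your degenerate case. $B_1=B_2$ forces $uv_1=\Nm(A+B_1)=uv_2$, hence $v_1=v_2$, rather than ``$\Nm(A+B_1)u$ equals both $v_1$ and $v_2$''. With these two repairs, the argument proves the lemma.
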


For simple graphs $F$ and $G$, let $\inj(F,G)$
denote the number of injective maps from $V(F)$
to $V(G)$ that preserve edges. We use the
following spectral counting lemma in a matrix
form that allows nonzero diagonal entries~\cite{AlonPudlak,BMRS}.
Here $\one$ denotes the all-ones vector,
$\one^\perp$ its orthogonal complement, and
$\norm{\cdot}$ the Euclidean operator norm.

\begin{lemma}[\cite{AlonPudlak,BMRS}]\label{lem:counting}
Let $\Delta\ge 1$ be an integer, and let $M$
be a symmetric $0$--$1$ matrix of order $n$ with
constant row sum $d>0$ such that
$\norm{M|_{\one^\perp}}\le\lambda$ and
\[
\lambda=o\left(\frac{d^\Delta}{n^{\Delta-1}}\right).
\]
Then the simple graph $G$ obtained by deleting all loops from the graph with adjacency matrix $M$ satisfies
\[
\inj(F,G)=(1+o(1))n^{v(F)}
\left(\frac dn\right)^{e(F)}
\]
for every fixed simple graph $F$ with
$\Delta(F)\le\Delta$.
\end{lemma}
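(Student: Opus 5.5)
The plan is to count homomorphisms with weights given by $M$ and then pass to injective maps. For a fixed simple graph $F$ on vertex set $[k]$ with $\Delta(F)\le\Delta$, put
\[
\hom_M(F)=\sum_{x_1,\ldots,x_k\in[n]}\ \prod_{ij\in E(F)}M_{x_ix_j}.
\]
I will show $\hom_M(F)=(1+o(1))n^{k}(d/n)^{e(F)}$. Then I will show that the maps which are non-injective, or which use a loop of $M$, contribute only $o\bigl(n^{k}(d/n)^{e(F)}\bigr)$. Since $G$ is $M$ with its loops removed, $\inj(F,G)$ is exactly the sum over injective maps, so this gives the lemma.

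\textbf{Decomposition.} Write $M=\frac dn J+E$, where $J$ is the all-ones matrix. Because $M$ is symmetric with constant row sum $d$, the vector $\one$ is an eigenvector with eigenvalue $d$. Hence $E\one=0$, $E$ is symmetric, and $\norm{E}=\norm{M|_{\one^\perp}}\le\lambda$.

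\textbf{Peeling off one edge.} Fix an edge $uv$ of $F$ and expand $M_{x_ux_v}$ only for that edge. This gives
\[
\hom_M(F)=\frac dn\hom_M(F-uv)+R,\qquad R=\sum_{a,b}E_{ab}W(a,b),
\]
where $W(a,b)$ is the weighted number of extensions of $F-uv$ with $x_u=a$ and $x_v=b$. Let $y$ range over the placements of the other vertices, and let $w_y\ge0$ be the product of the $M$-entries on edges avoiding $u$ and $v$. Then $W(a,b)=\sum_y w_y\,f_y(a)g_y(b)$, where $f_y$ is the indicator of the common $M$-neighbourhood of the images of $N_F(u)\setminus\{v\}$, and $g_y$ is the analogous indicator for $v$. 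Applying $|f^{T}Eg|\le\lambda\norm f\norm g$ and then Cauchy--Schwarz over $y$ gives
\[
|R|\le\lambda\Bigl(\sum_y w_y\norm{f_y}^2\Bigr)^{1/2}\Bigl(\sum_y w_y\norm{g_y}^2\Bigr)^{1/2}.
\]
Each factor $\sum_y w_y\norm{f_y}^2$ is a homomorphism count of the graph obtained from $F-uv$ by deleting $v$ and adding a twin of $u$.

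\textbf{Induction and the main obstacle.} I will induct on $e(F)$, using a crude a priori bound for the twinned counts. These have roughly $n^{k-1}(d/n)^{e(F)-1}$ times an extra factor at most $n(d/n)^{\deg u-1}$ for the twin. Consequently $|R|$ is of order $\lambda\, n^{k-1}(d/n)^{e(F)-1}\cdot(\text{a factor at most }d^{1-\Delta}n^{\Delta-1}\text{ up to constants})$. This is $o\bigl(n^{k}(d/n)^{e(F)}\bigr)$ precisely because $\lambda=o(d^\Delta/n^{\Delta-1})$. Iterating over all edges then yields the main term $n^k(d/n)^{e(F)}$.

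The obstacle is that twinning a vertex raises the degrees of its neighbours, so the maximum-degree hypothesis is not preserved. For these auxiliary counts I would therefore not use the induction hypothesis in its sharp form. Instead I would use the trivial upper bound obtained by embedding vertices one at a time in a degeneracy order. Each new vertex with at least one placed neighbour has at most $d$ choices, and the vertex $u$ (or $v$) of degree at most $\Delta$ costs at most $n(d/n)^{\deg -1}$ on average via the spectral bound. I expect the careful bookkeeping here to be the hardest step; the fallback is the standard Alon--Pudl\'ak induction on vertices, which embeds a vertex with at most $\Delta$ placed neighbours and controls the common-neighbourhood deviation by $\lambda$ in $\ell^2$.

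\textbf{Injective maps.} A non-injective map factors through a quotient $F'$ of $F$ with fewer vertices, whose identified vertices may carry loops taken from the diagonal of $M$. Bounding $\hom_M(F')$ crudely by $n^{v(F')}$ times $d^{\#\text{tree edges}}/n^{\#\text{tree edges}}$, combined with the condition $d^\Delta\gg\lambda n^{\Delta-1}\ge n^{\Delta-1}$ (which forces $d$ to be large), shows that these counts are $o\bigl(n^{k}(d/n)^{e(F)}\bigr)$. The same argument handles maps that use a loop of $M$, which are exactly the terms removed when passing from $M$ to $G$.
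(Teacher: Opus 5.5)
Your edge-peeling framework ($M=\frac dnJ+E$, expand one edge, bound the remainder spectrally) can be made to work, but as written it has a genuine gap at its central step, the bound on $R$. The order of magnitude you state for $|R|$ is right; your derivation of it is not.

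Since $f_y$ is $\{0,1\}$-valued, $\norm{f_y}^2=\sum_a f_y(a)$. Hence $\sum_y w_y\norm{f_y}^2=\hom_M(F-v)$ exactly, and no twin appears. A twin would instead produce $\sum_y w_y\bigl(\sum_a f_y(a)\bigr)^2$, and with such counts the bound would be of order $\lambda n^{k}(d/n)^{e(F)-1}$, which is useless. So the obstacle you describe does not exist. The substitute you propose, charging $d$ for each vertex with a placed neighbour, is too lossy: it loses a factor $n/d$ for each independent cycle of $F-v$ avoiding $u$. For $F=Q_3$ (so $\Delta=3$), even granting $u$ its ideal cost $n(d/n)^2$, this gives only $\hom_M(Q_3-v)=O(d^7)$ against the true order $d^9/n^2$. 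Hence $|R|=O(\lambda d^7)$, which is $o\bigl(n^8(d/n)^{12}\bigr)$ only if $\lambda=o(d^5/n^4)$; when $d=o(n)$ this is strictly stronger than the hypothesis.

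The missing observation is that $F-u$ and $F-v$ are subgraphs of $F$ with fewer edges and maximum degree at most $\Delta$. So the induction hypothesis (on $e(F)$, over all graphs of maximum degree at most $\Delta$) applies to them directly and gives
\[
|R|\le(1+o(1))\,\lambda n^{k-1}\Bigl(\frac dn\Bigr)^{e(F)-(\deg u+\deg v)/2}\le(1+o(1))\,\lambda n^{k-1}\Bigl(\frac dn\Bigr)^{e(F)-\Delta}.
\]
This is $o\bigl(n^{k}(d/n)^{e(F)}\bigr)$ precisely because $\lambda=o(d^\Delta/n^{\Delta-1})$.

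The passage to injective maps also fails as written. The spanning-forest bound $n^{c'}d^{v(F')-c'}$ on a quotient $F'$ is too weak. Merging two vertices of a connected $F$ on $k$ vertices, its ratio to the main term is $(n/d)^{e(F)-k+2}/n$. The exponent $e(F)-k+2$ is not bounded in terms of $\Delta$ (it equals $k/2+2$ for cubic $F$), whereas the hypothesis only gives $(n/d)^{\Delta}=o(n)$. Concretely, $\Delta=3$, $d=n^{0.81}$ and $\lambda=\Theta(\sqrt d)$ (e.g.\ random $d$-regular graphs) satisfy the hypothesis, yet for $F=Q_3$ your bound is $n^{5.86}$ against a main term of $n^{5.72}$.

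Instead, for each pair $i\ne j$, drop the edges at $j$. Since the entries of $M$ are $0$ or $1$, the maps with $x_i=x_j$ contribute at most $\hom_M(F-j)=(1+o(1))n^{k-1}(d/n)^{e(F)-\deg j}$. Its ratio to the main term is at most $n^{\Delta-1}/d^{\Delta}=o(1)$. This step uses $\lambda\ge1$ when $d<n$, which follows from $nd=\operatorname{tr}M^2\le d^2+(n-1)\lambda^2$; the case $d=n$ is trivial. That is the correct justification of your remark that $d$ must be large. Summing over the $\binom k2$ pairs covers every map that is non-injective or uses a loop.

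With these two repairs your edge-by-edge induction becomes a complete proof. The paper itself only quotes this lemma, and your route is genuinely different from, and shorter than, the vertex-by-vertex embedding behind the cited sources.
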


We next recall the spectral bound for
projective norm graphs obtained by
Szab\'o~\cite{SzaboSpectrum};
see also~\cite{BMRS}.

\begin{lemma}[\cite{SzaboSpectrum,BMRS}]
\label{lem:norm-spectrum}
Let $s\ge 3$ be an integer, and let $q$
be an odd prime power. Let
$W=\F_{q^{s-1}}\times\F_q^*,$ and let $M_1$ be the symmetric $0$--$1$ matrix
indexed by $W$ whose entry corresponding
to $(A,u)$ and $(B,v)$ is one if and only if
$\Nm(A+B)=uv,$
with nonzero diagonal entries allowed.
Then $M_1$ has constant row sum
$q^{s-1}-1$ and
$\norm{M_1|_{\one^\perp}}
\le q^{(s-1)/2}.$
\end{lemma}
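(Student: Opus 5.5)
The plan is to recognise $M_1$ as the adjacency matrix of a \emph{sum graph} (Cayley sum graph) on a finite abelian group, and to read off its spectrum from character sums. Write $\Gamma=(\F_{q^{s-1}},+)\times(\F_q^*,\cdot)$ and, for $x=(A,u)$ and $y=(B,v)$, set $x\ast y=(A+B,uv)$. Put
\[
D=\{(C,w)\in\Gamma:\ \Nm(C)=w\}.
\]
Then $(M_1)_{x,y}=1$ if and only if $x\ast y\in D$, and diagonal entries are allowed, exactly as in the statement. Note that $C=0$ can never occur in $D$, since $\Nm(0)=0\notin\F_q^*$.

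\textbf{Row sum.} Fix $(A,u)$.
\begin{itemize}
\item For $B=-A$ we have $\Nm(A+B)=0\neq uv$, so this $B$ contributes no entry.
\item For each of the $q^{s-1}-1$ elements $B\neq -A$, the value $\Nm(A+B)$ lies in $\F_q^*$, and there is exactly one $v\in\F_q^*$ with $uv=\Nm(A+B)$.
\end{itemize}
Hence every row sum equals $q^{s-1}-1$, and $\one$ is an eigenvector with this eigenvalue.

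\textbf{Spectrum of a sum graph.} For a character $\chi$ of $\Gamma$, compute
\[
(M_1\chi)(x)=\sum_{y:\,x\ast y\in D}\chi(y)=\sum_{d\in D}\chi(d)\,\overline{\chi(x)}=\chi(D)\,\overline{\chi}(x),
\qquad\text{where }\chi(D)=\sum_{d\in D}\chi(d).
\]
So $M_1$ maps the span of $\chi,\overline\chi$ into itself.
\begin{itemize}
\item If $\chi$ is real, it is an eigenvector with eigenvalue $\chi(D)$.
\item Otherwise, on $\mathrm{span}\{\chi,\overline\chi\}$ the matrix acts as $\begin{pmatrix}0&\overline{\chi(D)}\\ \chi(D)&0\end{pmatrix}$, whose eigenvalues are $\pm|\chi(D)|$.
\end{itemize}
Since the nontrivial characters span $\one^\perp$ and $M_1$ is symmetric, we get
\[
\norm{M_1|_{\one^\perp}}=\max_{\chi\neq 1}|\chi(D)|.
\]

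\textbf{Bounding the character sums.} Every character has the form $\chi(C,w)=\psi(C)\eta(w)$, with $\psi$ an additive character of $\F_{q^{s-1}}$ and $\eta$ a multiplicative character of $\F_q^*$. Then
\[
\chi(D)=\sum_{C\neq0}\psi(C)\,\eta(\Nm(C)).
\]
There are three cases.
\begin{itemize}
\item \emph{$\psi$ trivial, $\eta$ nontrivial.} The norm map $\Nm:\F_{q^{s-1}}^*\to\F_q^*$ is a surjective homomorphism, so its fibres all have size $(q^{s-1}-1)/(q-1)$. Hence $\chi(D)=\frac{q^{s-1}-1}{q-1}\sum_{w}\eta(w)=0$.
\item \emph{$\psi$ nontrivial, $\eta$ trivial.} Then $\chi(D)=\sum_{C\neq0}\psi(C)=-1$.
\item \emph{Both nontrivial.} Because $\Nm$ is surjective, $\eta\circ\Nm$ is a nontrivial multiplicative character of $\F_{q^{s-1}}^*$. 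So $\chi(D)$ is a Gauss sum over $\F_{q^{s-1}}$, and the classical evaluation gives $|\chi(D)|=q^{(s-1)/2}$.
\end{itemize}
In all cases $|\chi(D)|\le q^{(s-1)/2}$, which gives the claimed bound $\norm{M_1|_{\one^\perp}}\le q^{(s-1)/2}$.

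\textbf{Main obstacle.} The only delicate points are the sum-graph spectral bookkeeping and the use of surjectivity of the norm. The bookkeeping needs care because the eigenvectors pair $\chi$ with $\overline\chi$ rather than being characters individually. Surjectivity of $\Nm$ is used twice: to make the fibres uniform in the first case, and to guarantee that $\eta\circ\Nm$ is nontrivial in the third. The Gauss sum modulus is standard.
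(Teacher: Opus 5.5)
The paper does not prove this lemma itself but imports it from Szab\'o~\cite{SzaboSpectrum} (see also~\cite{BMRS}). Your argument is correct and is essentially the standard one from that source: view $M_1$ as a Cayley sum graph on $\F_{q^{s-1}}\times\F_q^*$ with connection set $D=\{(C,w):\Nm(C)=w\}$, so that its eigenvalues are $\chi(D)$ for real characters and $\pm|\chi(D)|$ otherwise, and then evaluate $\chi(D)$ as $0$, $-1$, or a Gauss sum over $\F_{q^{s-1}}$ of modulus $q^{(s-1)/2}$. The delicate points you flag (the $\chi,\overline\chi$ pairing, and surjectivity of $\Nm$ for uniform fibres and nontriviality of $\eta\circ\Nm$) are all handled correctly, and the oddness of $q$ is indeed not needed.
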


We now establish a counting result for the
norm graph $G_q$.

\begin{lemma}\label{lem:quotient-counts}
Let $r\ge3$ and $s\ge4r-6$ be  integers, and let $F$ be a  simple graph with
$\Delta(F)\le 2r-3$.
Then, for sufficiently large $q$, we have
\begin{equation*}
\inj(F,G_q)=(1+o(1))n_q^{v(F)}\left(\frac{q^{s-1}-1}{n_q}\right)^{e(F)}.
\end{equation*}
\end{lemma}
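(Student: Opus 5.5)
This follows directly from the spectral counting Lemma~\ref{lem:counting}, applied to the matrix $M_1$ of Lemma~\ref{lem:norm-spectrum} with $\Delta=2r-3$.

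First, the graph obtained from $M_1$ by deleting all loops is exactly $G_q$. Indeed, the vertex set is $W=V(G_q)$. Off the diagonal, the entry for $(A,u)$ and $(B,v)$ equals one if and only if $\Nm(A+B)=uv$, which is the adjacency rule of $G_q$. Moreover, $M_1$ is a symmetric $0$--$1$ matrix of order $n_q=q^{s-1}(q-1)$. By Lemma~\ref{lem:norm-spectrum}, it has constant row sum $d=q^{s-1}-1>0$, and $\norm{M_1|_{\one^\perp}}\le\lambda:=q^{(s-1)/2}$.

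Next, we check the growth condition
\[
\lambda=o\left(\frac{d^\Delta}{n_q^{\Delta-1}}\right),\qquad \Delta=2r-3 .
\]
As $q\to\infty$ we have $d=(1+o(1))q^{s-1}$ and $n_q=(1+o(1))q^{s}$. Hence
\[
\frac{d^\Delta}{n_q^{\Delta-1}}=\Theta\bigl(q^{(s-1)\Delta-s(\Delta-1)}\bigr)=\Theta\bigl(q^{s-\Delta}\bigr).
\]
So it suffices that $(s-1)/2<s-\Delta$, that is, $s>2\Delta-1=4r-7$. This is exactly the hypothesis $s\ge4r-6$. The quotient is then $\lambda\, n_q^{\Delta-1}/d^\Delta=\Theta\bigl(q^{(2\Delta-1-s)/2}\bigr)\le\Theta(q^{-1/2})\to0$.

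Finally, since $\Delta(F)\le 2r-3=\Delta$ and $F$ is fixed, Lemma~\ref{lem:counting} gives
\[
\inj(F,G_q)=(1+o(1))\,n_q^{v(F)}\left(\frac{q^{s-1}-1}{n_q}\right)^{e(F)},
\]
as claimed.

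The argument is a direct verification, so there is no real obstacle. The only point needing care is the exponent bookkeeping in the growth condition. One must use the exact asymptotics $d\sim q^{s-1}$ and $n_q\sim q^s$ and confirm that the inequality $s>4r-7$ is strict. It is, so the quotient tends to zero at rate $q^{-1/2}$ or faster. The $o(1)$ terms depend only on $F$, $r$ and $s$, which matches the phrase ``for sufficiently large $q$'' in the statement.
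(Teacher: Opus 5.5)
Your proposal is correct and matches the paper's proof: you apply Lemma~\ref{lem:counting} to the loop-allowed matrix of Lemma~\ref{lem:norm-spectrum} with $\Delta=2r-3$ and $\lambda=q^{(s-1)/2}$. Your exponent check $\lambda n_q^{\Delta-1}/d^{\Delta}=\Theta(q^{(4r-7-s)/2})=O(q^{-1/2})$ is the same computation the paper writes as $q^{(s-1)/2}/(n_qp^{2r-3})=O(q^{2r-3-(s+1)/2})$.
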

\begin{proof}
Let $M$ be the symmetric $0$--$1$ matrix
indexed by $V(G_q)$, defined by
\[
M_{(A,u),(B,v)}=
\begin{cases}
1,& \Nm(A+B)=uv,\\
0,& \text{otherwise},
\end{cases}
\]
where nonzero diagonal entries are allowed.
Thus the adjacency matrix of $G_q$ is
obtained from $M$ by setting its diagonal
entries to zero.
By Lemma~\ref{lem:norm-spectrum},
$M$ has constant row sum $d=q^{s-1}-1$ and
$\norm{M|_{\one^\perp}}\le q^{(s-1)/2}.$

Let $p=(q^{s-1}-1)/n_q.$
Since $n_q=q^{s-1}(q-1)$ and $s\ge4r-6$,
it follows that
\[
\begin{aligned}
\frac{q^{(s-1)/2}}{n_qp^{2r-3}}=O\left(q^{2r-3-(s+1)/2}\right)=O(q^{-1/2})=o(1).
\end{aligned}
\]
Thus, taking $\Delta=2r-3$ and
$\lambda=q^{(s-1)/2}$, we have
\[
\lambda
=o\left(\frac{d^\Delta}{n_q^{\Delta-1}}\right).
\]
Applying Lemma~\ref{lem:counting}
completes the proof.
\end{proof}

We use the following consequence of the
Pippenger--Spencer theorem.

\begin{theorem}[\cite{PippengerSpencer}]
\label{thm:coloring}
Let $r\ge 3$ be an  integer. For sufficiently large $n$ and $d>0$, if $\mathcal H$
is an $r$-graph on $n$ vertices such that
$\Delta(\mathcal H)\le (1+o(1))d$ and
$\Delta_2(\mathcal H)=o(d)$,
then $\mathcal H$ has a proper edge-coloring with at most
$(1+o(1))d$ colors. In particular, if
\[
e(\mathcal H)=(1+o(1))\frac{nd}{r},
\]
then $\mathcal H$ contains a matching of size
$(1-o(1))n/r$.
\end{theorem}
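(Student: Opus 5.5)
The first assertion is the Pippenger--Spencer theorem on the chromatic index of near-regular hypergraphs with small codegrees, so I would not reprove it. I would quote it in its standard form: for fixed $r$ and every $\varepsilon>0$ there exist $\delta>0$ and $d_0$ such that any $r$-graph $\mathcal H$ with $\Delta(\mathcal H)\le(1+\delta)d$, $\Delta_2(\mathcal H)\le\delta d$ and $d\ge d_0$ has chromatic index at most $(1+\varepsilon)d$.

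The $o(1)$ hypotheses of the statement are to be read along a sequence of hypergraphs with $d\to\infty$. Under that reading they give, for every fixed $\delta$, that $\Delta(\mathcal H)\le(1+\delta)d$ and $\Delta_2(\mathcal H)\le\delta d$ eventually. Hence the quoted form applies for every fixed $\varepsilon$, and the chromatic index is $(1+o(1))d$. The only point needing care is this translation of the asymptotic hypotheses into the $\varepsilon$--$\delta$ form. This is where I would be explicit that ``sufficiently large $n$ and $d$'' means $d\ge d_0(\varepsilon)$ along the sequence. The dependence of $\delta$ and $d_0$ only on $r$ and $\varepsilon$ is what makes the diagonal argument legitimate.

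For the ``in particular'' part I would use pigeonhole on colour classes. Fix a proper edge-colouring with $N=(1+o(1))d$ colours. Each colour class is a matching, and the classes partition $E(\mathcal H)$. Some class therefore has size at least
\[
\frac{e(\mathcal H)}{N}=\frac{(1+o(1))nd/r}{(1+o(1))d}=(1-o(1))\frac nr .
\]
That class is the required matching. No upper bound is needed, since trivially no matching exceeds $n/r$ edges.

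I expect no real obstacle here beyond bookkeeping the $o(1)$ terms; the substantive content is entirely in the cited theorem~\cite{PippengerSpencer}.
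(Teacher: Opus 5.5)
Your proposal is correct and matches the paper. The paper also simply quotes Pippenger--Spencer for the coloring assertion, and it gets the matching exactly as you do, by taking a largest color class of size at least $e(\mathcal H)/((1+o(1))d)=(1-o(1))n/r$ (this is the step written out in the proof of Lemma~\ref{lem:moment-matching}); incidentally, $d\to\infty$ need not be imposed separately, since $\Delta_2(\mathcal H)\ge 1$ whenever $\mathcal H$ has an edge, so $\Delta_2(\mathcal H)=o(d)$ already forces it.
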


The following lemma gives a matching criterion in terms of the
second moments of degrees and codegrees.

\begin{lemma}\label{lem:moment-matching}
Let $r\ge 3$ be a fixed integer. For sufficiently large
$n$, let $d>0$ and let $\mathcal H$ be an $r$-graph on
$n$ vertices satisfying
\begin{enumerate}
    \item[(i)]  $e(\mathcal H)=(1+o(1))\frac{nd}{r}$;
    \item[(ii)] $\sum_{x\in V(\mathcal H)}(d_{\mathcal H}(x)-d)^2
=o(nd^2)$;
    \item[(iii)] $\sum_{\{x,y\}\in\binom{V(\mathcal H)}{2}}d_{\mathcal H}(x,y)^2=o(nd^2).$
\end{enumerate}
Then $\mathcal H$ contains a matching of size
$(1-o(1))n/r$.
\end{lemma}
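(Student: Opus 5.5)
We reduce Lemma~\ref{lem:moment-matching} to Theorem~\ref{thm:coloring}: delete a small set of bad vertices so that the remaining hypergraph has maximum degree at most $(1+o(1))d$ and maximum codegree $o(d)$, while losing only $o(nd)$ edges. Throughout, $\varepsilon=\varepsilon(n)\to0$ is chosen slowly enough.

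First, the high-degree vertices. Let $B_1=\{x: d_{\mathcal H}(x)>(1+\varepsilon)d\}$. Each $x\in B_1$ has $(d(x)-d)^2\ge \varepsilon d(x)^2/(1+\varepsilon)$ (up to constants), because $d(x)-d\ge \frac{\varepsilon}{1+\varepsilon}d(x)$. Hence, by (ii),
\[
\sum_{x\in B_1}d(x)^2=O\bigl(\varepsilon^{-2}\bigr)\,o(nd^2).
\]
Choosing $\varepsilon$ so that this is $o(nd^2)$ in a weaker but sufficient form, Cauchy--Schwarz (or simply $d(x)\ge d$ on $B_1$) gives $\sum_{x\in B_1}d(x)\le d^{-1}\sum_{x\in B_1} d(x)^2=o(nd)$. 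So deleting all edges meeting $B_1$ removes $o(nd)$ edges.

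Next, the high codegrees. Call a pair $\{x,y\}$ heavy if $d(x,y)>\varepsilon d$, and let $E'$ be the set of edges containing a heavy pair. Each such edge is counted by one of its heavy pairs, so
\[
|E'|\le\sum_{\text{heavy}}d(x,y)\le(\varepsilon d)^{-1}\sum d(x,y)^2=o(nd)
\]
by (iii), again after choosing $\varepsilon$ decaying slowly relative to the $o(\cdot)$ terms. Let $\mathcal H'$ be $\mathcal H$ with the edges meeting $B_1$ and the edges in $E'$ removed. Then $\Delta(\mathcal H')\le(1+\varepsilon)d$ and $\Delta_2(\mathcal H')\le\varepsilon d=o(d)$. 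Also, by (i),
\[
e(\mathcal H')=(1+o(1))\frac{nd}{r}.
\]
Note that the vertex set is still of size $n$, since we only deleted edges.

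Now apply Theorem~\ref{thm:coloring} to $\mathcal H'$. This needs $d\to\infty$, or at least large, for $\Delta_2=o(d)$ to be meaningful. If $d$ is bounded, (iii) still forces almost all pairs to have small codegree, but $o(d)$ then means codegree $0$, so I would treat that case separately or assume $d\to\infty$ implicitly. The theorem gives a proper coloring with $(1+o(1))d$ colors. By pigeonhole, some color class has at least $e(\mathcal H')/((1+o(1))d)=(1-o(1))n/r$ edges, and it is a matching in $\mathcal H$.

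The main obstacle is the bookkeeping of the $o(\cdot)$ terms: choosing a single $\varepsilon\to0$ such that $\varepsilon^{-1}\cdot o(1)$ and $\varepsilon^{-2}\cdot o(1)$ still tend to zero. For $B_1$, a cleaner route avoids the $\varepsilon^{-2}$ factor. Since $d(x)\le (d(x)-d)(1+\varepsilon)/\varepsilon$ on $B_1$, Cauchy--Schwarz gives $\sum_{B_1}(d(x)-d)\le \sqrt{|B_1|\,o(nd^2)}$ with $|B_1|\le o(nd^2)/(\varepsilon d)^2$. Hence $\sum_{B_1}d(x)=O(\varepsilon^{-2})\,o(nd)$. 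Taking $\varepsilon$ to be the fourth root of the $o(1)$ rates in (ii) and (iii) makes every error term $o(nd)$.
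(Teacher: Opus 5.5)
Your proposal is correct and is essentially the paper's proof: you delete the edges meeting $\{x: d_{\mathcal H}(x)>(1+\varepsilon)d\}$, controlled by the Chebyshev-type bound on the size of this set plus Cauchy--Schwarz, and the edges through pairs of codegree greater than $\varepsilon d$, controlled by dividing (iii) by $\varepsilon d$, with $\varepsilon\to0$ slowly relative to the rates in (ii) and (iii); you then apply Theorem~\ref{thm:coloring} and take the largest color class. The bounded-$d$ case you set aside cannot occur, because codegrees are nonnegative integers, so $\sum_{\{x,y\}}d_{\mathcal H}(x,y)^2\ge\sum_{\{x,y\}}d_{\mathcal H}(x,y)=\binom r2 e(\mathcal H)$, and hence (i) and (iii) together give $nd=o(nd^2)$, i.e.\ $d\to\infty$ automatically.
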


\begin{proof}
For convenience, let $S_1=\sum_{x\in V(\mathcal H)}(d_{\mathcal H}(x)-d)^2$ and $S_2=\sum_{\{x,y\}\in\binom{V(\mathcal H)}{2}}
d_{\mathcal H}(x,y)^2.$ We may then choose
$\varepsilon>0$ with $\varepsilon=o(1)$ such that $S_1
=o(\varepsilon^2 nd^2)$ and $S_2 =o(\varepsilon^2nd^2).$
Let
$B=\{x\in V(\mathcal H): d_{\mathcal H}(x)>(1+\varepsilon)d\}.$
It follows that
\[
|B|\le \frac{S_1}{\varepsilon^2d^2}=o(n).
\]
Thus, by the Cauchy--Schwarz inequality,
\[
\begin{aligned}
\sum_{x\in B}d_{\mathcal H}(x)&=\sum_{x\in B}(d+d_{\mathcal H}(x)-d)
= d|B|+\sum_{x\in B}(d_{\mathcal H}(x)-d)\\
&\leq d|B|+\sqrt{|B|S_1}=o(nd).
\end{aligned}
\]
Moreover,
\[
\begin{aligned}
&\sum_{\substack{\{x,y\}\in\binom{V(\mathcal H)}2\\
d_{\mathcal H}(x,y)>\varepsilon d}}
d_{\mathcal H}(x,y)\le \frac{S_2}{\varepsilon d}
=o(nd).
\end{aligned}
\]

Let $\mathcal H'$ be obtained from $\mathcal H$
by deleting all edges meeting $B$ and all edges
containing a pair $\{x,y\}$ with
$d_{\mathcal H}(x,y)>\varepsilon d$.
Then
$e(\mathcal H')=(1+o(1))\frac{nd}{r},$
$\Delta(\mathcal H')\le(1+\varepsilon)d$ and
$\Delta_2(\mathcal H')\le\varepsilon d.$
By Theorem~\ref{thm:coloring}, $\mathcal H'$
has a proper edge-coloring with at most
$(1+o(1))d$ colors.
Since each color class is a matching,
$\mathcal H'$ contains a matching of size at least
\[
\frac{e(\mathcal H')}{(1+o(1))d}
=(1-o(1))\frac nr.
\]
This completes the proof.
\end{proof}

\begin{proof}[Proof of Theorem~\ref{thm:general-uniformity}]
By the prime number theorem, we may choose
an odd prime $q\le n^{1/s}$ such that
$q=(1-o(1))n^{1/s}$.
Let $G_q$ be the $K_{s,t}$-free norm
graph of Alon, R\'onyai and Szab\'o~\cite{ARS}.
Then,
$n_q=|V(G_q)|=q^{s-1}(q-1).$
Let $p=(q^{s-1}-1)/n_q$. By
Lemma~\ref{lem:quotient-counts}, the number of copies
of $K_r$ in $G_q$ is
\[
\frac{\inj(K_r,G_q)}{r!}
=(1+o(1))\frac{n_q^rp^{\binom r2}}{r!}.
\]

We now construct an auxiliary hypergraph
$\mathcal H$ with
$V(\mathcal H)=E(G_q)$ and
\[
E(\mathcal H)=
\{E(K):K\text{ is a copy of }K_r
\text{ in }G_q\}.
\]
Clearly, $\mathcal H$ is a $\binom r2$-graph,
and matchings in $\mathcal H$ correspond
to families of copies of $K_r$ in $G_q$ with pairwise
disjoint edge sets.
Let $r'=\binom r2, n'=e(G_q)$ and $d=(n_q^{r-2}p^{r'-1})/(r-2)!.$ Then, by Lemma~\ref{lem:quotient-norm},
$n'=(1+o(1))\frac{n_q^2p}{2}.$
Since $n_q=\Theta(q^s)$ and
$p=\Theta(q^{-1})$, we have
$d=\Theta\left(
q^{s(r-2)-r'+1}
\right)$. It follows that
\begin{equation}\label{eq:clique-first}
\begin{aligned}
e(\mathcal H)=\frac{\inj(K_r,G_q)}{r!}=(1+o(1))\frac{n_q^rp^{r'}}{r!}=(1+o(1))\frac{n'd}{r'}.
\end{aligned}
\end{equation}

For $2\le\ell\le r$, let $Q_\ell$ be the number
of ordered pairs $(K,L)$ of copies of $K_r$
in $G_q$ with
$|V(K)\cap V(L)|=\ell$.
We allow $K=L$ when $\ell=r$.
Let $F_\ell$ be the union of two copies of $K_r$ sharing exactly $\ell$ vertices. It is clear that $v(F_\ell)=2r-\ell, e(F_\ell)=2\binom r2-\binom\ell2$ and $\Delta(F_\ell)=2r-\ell-1\le2r-3.$
Since each ordered pair counted by $Q_\ell$
corresponds to exactly
$\ell!((r-\ell)!)^2$ embeddings of $F_\ell$, Lemma~\ref{lem:quotient-counts} gives
\begin{equation*}
\begin{aligned}
Q_\ell=\frac{\inj(F_\ell,G_q)}
{\ell!((r-\ell)!)^2}=(1+o(1))
\frac{n_q^{2r-\ell}
p^{2\binom r2-\binom\ell2}}
{\ell!((r-\ell)!)^2}.
\end{aligned}
\end{equation*}
Thus, we obtain
\begin{equation*}
\begin{aligned}
Q_2=(1+o(1))
\frac{n_q^{2r-2}p^{2r'-1}}
{2((r-2)!)^2}=(1+o(1))n'd^2.
\end{aligned}
\end{equation*}
For $3\le\ell\le r$, the assumption $s\ge4r-6$ gives
\begin{equation*}
\begin{aligned}
\frac{Q_\ell}{n'd^2}
&=O\left(
n_q^{2-\ell}p^{1-\binom\ell2}
\right)=O\left(
q^{-(\ell-2)(s-(\ell+1)/2)}
\right)
=o(1).
\end{aligned}
\end{equation*}
Double counting and the preceding two estimates give
\[
\begin{aligned}
\sum_{x\in V(\mathcal H)}d_{\mathcal H}(x)^2=\sum_{\ell=2}^r\binom\ell2 Q_\ell=(1+o(1))n'd^2.
\end{aligned}
\]
On the other hand, by~\eqref{eq:clique-first},
\[
\sum_{x\in V(\mathcal H)}d_{\mathcal H}(x)
=r'e(\mathcal H)=(1+o(1))n'd.
\]
It follows that
\begin{equation}\label{Con2}
\begin{aligned}
&\sum_{x\in V(\mathcal H)}
(d_{\mathcal H}(x)-d)^2=
\sum_{x\in V(\mathcal H)}d_{\mathcal H}(x)^2
-2d\sum_{x\in V(\mathcal H)}d_{\mathcal H}(x)
+n'd^2=o(n'd^2).
\end{aligned}
\end{equation}
Similarly, double counting gives
\begin{equation}\label{Con3}
\begin{aligned}
&\sum_{\{x,y\}\in\binom{V(\mathcal H)}2}
d_{\mathcal H}(x,y)^2=
\sum_{\ell=2}^r
\binom{\binom\ell2}{2}Q_\ell=o(n'd^2).
\end{aligned}
\end{equation}

By~\eqref{eq:clique-first}, \eqref{Con2},
\eqref{Con3}, and Lemma~\ref{lem:moment-matching},
$G_q$ contains a family $\mathcal K$ of copies
of $K_r$ with pairwise disjoint edge sets
such that
\[
|\mathcal K|
=(1-o(1))\frac{e(G_q)}{\binom r2}.
\]
Let $H_q$ be an $r$-graph on vertex set $V(G_q)$ with edge set
$E(H_q)=\{V(K):K\in\mathcal K\}.$
Then, by Lemma~\ref{lem:quotient-norm} and
$n_q=(1+o(1))q^s$,
\[
\begin{aligned}
e(H_q)=(1-o(1))
\frac{e(G_q)}{\binom r2}=\left(\frac1{r(r-1)}+o(1)\right)
n_qq^{s-1}=\left(
\frac{1}{r(r-1)}+o(1)
\right)n_q^{2-1/s}.
\end{aligned}
\]
Since any two copies in $\mathcal K$ share
at most one vertex, $H_q$ is linear.
Since $G_q$ is $K_{s,t}$-free, $H_q$ is $\Berge_r(K_{s,t})$-free.

Finally, adding $n-n_q$ isolated vertices
to $H_q$, we obtain a
$\Berge_r(K_{s,t})$-free linear $r$-graph
$H$ on $n$ vertices.
Since $n_q=(1-o(1))n$, we have
\[
\begin{aligned}
e(H)=e(H_q)
&\ge
\left(\frac1{r(r-1)}-o(1)\right)
n^{2-1/s}.
\end{aligned}
\]
This completes the proof.
\end{proof}

\section*{Declaration on the use of generative AI}

The authors used generative AI tools to assist in discussing proof strategies, checking proofs, and
improving the exposition.
This project has been under investigation since 2023. 
The authors take full responsibility for the mathematical arguments, results,
and conclusions, all of which were carefully reviewed and verified by them.


\begin{thebibliography}{99}

\bibitem{AlonPudlak}
N.~Alon, P.~Pudl\'ak,
Constructive lower bounds for off-diagonal Ramsey numbers,
\emph{Israel J. Math.}, 122: 243--251, 2001.

\bibitem{ARS}
N.~Alon, L.~R\'onyai, T.~Szab\'o,
Norm-graphs: variations and applications,
\emph{J. Combin. Theory Ser. B}, 76: 280--290, 1999.

\bibitem{BMRS}
T.~Bayer, T.~M\'esz\'aros, L.~R\'onyai, T.~Szab\'o,
Subgraphs of the projective norm graph,
\emph{Forum Math. Sigma}, 14: e121, 2026.

\bibitem{CooperDutle}
J.~Cooper, A.~Dutle,
Spectra of uniform hypergraphs,
\emph{Linear Algebra Appl.}, 436: 3268--3292, 2012.

\bibitem{FGCH}
L.~Fang, G.~Gao, A.~Chang, Y.~Hou,
On the spectral Tur\'an problems for bipartite hypergraphs,
\emph{Electron. J. Combin.}, 32(4): \#P4.53, 2025.

\bibitem{Furedi}
Z.~F\"uredi,
An upper bound on Zarankiewicz' problem,
\emph{Combin. Probab. Comput.}, 5: 29--33, 1996.

\bibitem{GaoChang}
G.~Gao, A.~Chang,
A linear hypergraph extension of the bipartite Tur\'an problem,
\emph{European J. Combin.}, 93: 103269, 2021.

\bibitem{Lim05}
L.-H.~Lim,
Singular values and eigenvalues of tensors: a variational approach,
in: \emph{Proceedings of the IEEE International Workshop on
Computational Advances in Multi-Sensor Adaptive Processing
(CAMSAP 05)}, 129--132, 2005.

\bibitem{Nikiforov}
V.~Nikiforov,
A contribution to the Zarankiewicz problem,
\emph{Linear Algebra Appl.}, 432: 1405--1411, 2010.

\bibitem{PippengerSpencer}
N.~Pippenger, J.~Spencer,
Asymptotic behavior of the chromatic index for hypergraphs,
\emph{J. Combin. Theory Ser. A}, 51: 24--42, 1989.

\bibitem{Qi05}
L.~Qi,
Eigenvalues of a real supersymmetric tensor,
\emph{J. Symb. Comput.}, 40: 1302--1324, 2005.

\bibitem{SFK}
C.-M.~She, Y.-Z.~Fan, L.~Kang,
Spectral bipartite Tur\'an problems on linear hypergraphs,
\emph{Discrete Math.}, 348(6): 114435, 2025.

\bibitem{SzaboSpectrum}
T.~Szab\'o,
On the spectrum of projective norm-graphs,
\emph{Inform. Process. Lett.}, 86(2): 71--74, 2003.

\end{thebibliography}
\end{document}